\newtheorem{theorem}{Theorem}[section]
\newtheorem{algorithm}{Algorithm}
\newtheorem{lemma}[theorem]{Lemma}
\newtheorem{corollary}[theorem]{Corollary}
\newtheorem{proposition}[theorem]{Proposition} 
\theoremstyle{definition}
\theoremstyle{remark}
\newtheorem{remark}[theorem]{Remark}
\newcommand{\e}{\varepsilon}
\newcommand{\T}{\mathcal{T}}
\newcommand{\p}{\bm p}
\newcommand{\V}{V_{\rm hp}(\T,\p)}
\newcommand{\Vn}{V_{\rm hp}(\T^n,\p^n)}
\newcommand{\dd}{\mathsf{d}}
\newcommand{\dx}{\,\mathsf{d}x}
\renewcommand{\u}{u_{\rm hp}}
\newcommand{\N}[1]{\left|#1\right|}
\newcommand{\NN}[1]{\left\|#1\right\|}
\newcommand{\NNN}[1]{\left|\!\left|\!\left|#1\right|\!\right|\!\right|}
\newcommand{\err}{e_{\rm hp}}
\newcommand{\R}{\mathsf{R}_{\rm hp}[\err]}
\renewcommand{\P}{\Pi_{K_j}}
\newcommand{\jl}{[\![}
\newcommand{\jr}{]\!]}
\newcommand{\jmp}[1]{\jl#1\jr}
\newcommand{\intp}{\pi_{\V}}
\newcommand{\vhp}{v_{\rm hp}}
\title{A Posteriori Error Analysis of $hp$-FEM for singularly perturbed problems}
\author[J.~M.~Melenk]{Jens M.~Melenk}
\address{
J.~M.~Melenk, 
Institut f\"ur Analysis und Scientific Computing, 
TU Wien, Wiedner Hauptstrasse 8--10, 
A-1040 Wien, Austria.
}
\email{melenk@tuwien.ac.at}
\author[T.~P.~Wihler]{Thomas P.~Wihler}
\address{
T.~P.~Wihler,
Mathematics Institute, 
University of Bern, 
CH-3012 Bern, Switzerland.
}
\email{wihler@math.unibe.ch}
\thanks{TW acknowledges the support of the Swiss National Science Foundation.}
\keywords{$hp$-FEM \and $hp$-adaptivity, \emph{a posteriori} error estimates, singularly, perturbed problems.}
\subjclass{65N30}
\begin{document}

\begin{abstract}
We consider the approximation of singularly perturbed linear second-order boundary value problems by $hp$-finite element methods. In particular, we include the case where the associated differential operator may not be coercive. Within this setting we derive an \emph{a posteriori} error estimate for a natural residual norm. The error bound is robust with respect to the perturbation parameter and fully explicit with respect to both the local mesh size~$h$ and the polynomial degree~$p$.
\end{abstract}

\maketitle

\section{Introduction}

\emph{A posteriori} error estimation and adaptivity for low-order methods 
has seen a significant development in the last decades as witnessed 
by several monographs 
\cite{ainsworth-oden00,babuska-strouboulis01,verfurth13} on 
\emph{a posteriori} error estimation, and on convergence and optimality 
of adaptive algorithms; see, e.g., \cite{cascon-kreuzer-nochetto-siebert08,feischl-fuehrer-praetorius14,stevenson07}. 
The situation is less developed for high-order finite element methods 
($hp$-FEM), where both the local mesh size can be reduced and the local
approximation order can be increased to improve the accuracy. 

In an $hp$-context, several adaptive strategies and algorithms have been proposed (see~\cite{MiMc11} for an overview and comparison). The first work on $hp$-adaptive strategies for finite element approximations of elliptic problems was presented in~\cite{ROD89}. In addition, methods based on smoothness estimation techniques were proposed in~\cite{eibner-melenk06,HoustonSeniorSuliENUMATH,HoustonSuliHPADAPT,Ma94}, or in the recent approach~\cite{FaWiWi14,Wi11_2,Wi11} involving Sobolev embeddings, which will also be exploited in the present article. Moreover, a prediction technique was developed in~\cite{MelenkAPOST01}. Further $hp$-adaptive approaches in the literature include, for example, the use of \emph{a priori} knowledge, mesh optimization strategies, the Texas-3-step algorithm, or the application of reference solution
strategies; see, e.g., \cite{AiSe98,De07,DoHe07,GuiBabuska86,OdPaFe92}. Research focusing on the convergence of~$hp$-adaptive FEM has been developed only recently in~\cite{buerg-doerfler11,fourier}. 

In spite of the practical success of these $hp$-adaptive algorithms, 
\emph{a posteriori} error estimation in $hp$-FEM is still a topic of active 
research, and several, structurally different \emph{a posteriori} error 
estimators for $hp$-FEM for standard elliptic problems are available in 
the literature. We mention in particular the one of residual type, featuring 
a reliability-efficiency gap in the 
approximation order~\cite{doerfler-sauter13,MelenkAPOST01}, 
and the $p$-robust estimators of \cite{braess-schoeberl-pillwein09}, which is particularly suited for $H^1$-elliptic formulations.

Here, we present an \emph{a posteriori} error estimator 
for $hp$-FEM that is suitable for singularly perturbed problems; it is 
of residual type and results from merging the techniques of \cite{Ve1998} for 
singular perturbations with $p$-explicit estimators from \cite{MelenkAPOST01}. 
More precisely, on an interval~$\Omega=(a,b)\subset\mathbb{R}$, $a<b$, we consider
the singularly perturbed boundary value problem
\begin{alignat}{2}
-\e u''(x)+d(x)u(x)&=f(x),&\qquad x\in\Omega,\label{eq:1}\\
u(a)=u(b)&=0.\label{eq:2}
\end{alignat}
Here, $\e>0$ is a possibly small constant, $d\in L^\infty(\Omega)$ is
a given function, and~$f\in L^2(\Omega)$ is the right-hand side. We use standard notation: For an open set~$D\subseteq\Omega$, we let~$L^2(D)$ be the standard Lebesgue space of all square-integrable functions on~$D$ with norm~$\|\cdot\|_{L^2(D)}$, and~$L^\infty(D)$ is the space of all essentially bounded functions on~$D$ with norm~$L^\infty(D)$.

We
propose the following variational formulation
of~\eqref{eq:1}--\eqref{eq:2}: Find~$u\in H^1_0(\Omega)$, the standard
$L^2$-based Sobolev space of first order with vanishing trace, such
that
\begin{equation}\label{eq:var}
  a(u,v):=\e\int_\Omega u'(x)v'(x)\dx+\int_\Omega d(x)u(x)v(x)\dx=\int_\Omega f(x)v(x)\dx\qquad\forall v\in H^1_0(\Omega).
\end{equation}
Throughout this paper, we make the
general assumption that the solution of~\eqref{eq:var} exists and is
unique. Evidently, this the case if~$d\ge 0$.

The article is organized as follows: In the following Section~\ref{sc:hpFEM} we provide the $hp$-framework and $hp$-FEM for the discretization of~\eqref{eq:1}--\eqref{eq:2}. Furthermore, Section~\ref{sc:analysis} contains some $hp$-interpolation results, and the $hp$-\emph{a posteriori} error analysis. In addition, we present some numerical tests in Section~\ref{sc:numerics}. Finally, we summarize our work in Section~\ref{sc:concl}.


\section{$hp$-FEM Discretization}\label{sc:hpFEM}

In order to discretize the boundary value
problem~\eqref{eq:1}--\eqref{eq:2} by means of an $hp$-finite element
method, let us introduce a partition~$\T=\{K_j\}_{j=1}^N$ of~$N\ge 1$ (open)
elements~$K_j=(x_{j-1},x_j)$, $j=1,2,\ldots,N$ on~$\Omega=(a,b)$, with
\[
a=x_0<x_1<x_2<\ldots<x_{N-1}<x_N=b.
\]
The length of an element~$K_j$ is denoted by~$h_j=x_j-x_{j-1}$,
$j=1,2,\ldots,N$. For each element $K_j \in \T$, it will be convenient 
to introduce the patch $\widetilde K_j = \bigcup \{K_i\in\T\,|\, 
\overline{K_i}\cap \overline{K_j} \ne \emptyset\}$ as the union of~$K_j$ and
of the elements adjacent to it. 
In addition, to each element~$K_j$ we associate a
polynomial degree~$p_j\ge 1$, $j=1,2,\ldots,N$. These numbers are
stored in a polynomial degree vector~$\p=(p_1,p_2,\ldots,p_N)$. Then,
we define an $hp$-finite element space by
\[
\V=\left\{v\in H^1_0(\Omega):\,
v|_{K_j}\in\mathbb{P}_{p_j}(K_j),\,j=1,2,\ldots,N\right\},
\]
where, for~$p\ge 1$, we denote by~$\mathbb{P}_p$ the space of all
polynomials of degree at most~$p$. We say that the pair $(\T,\p)$ 
of a partition $\T$ and of a degree vector $\p$ is $\mu$-shape regular, for some constant~$\mu>0$ independent of~$j$, if 
\begin{equation}
\label{eq:gamma-shape-regular}
\mu^{-1} h_{j+1} \leq h_{j} \leq \mu h_{j+1}, 
\qquad  
\mu^{-1} p_{j+1} \leq p_{j} \leq \mu p_{j+1}, 
\qquad j=1,\ldots,N-1, 
\end{equation}
i.e., if both the element sizes and polynomial degrees of neighboring 
elements are comparable. 

We can now discretize the variational formulation~\eqref{eq:var} by
finding a numerical approximation~$\u\in\V$ such that
\begin{equation}\label{eq:hpFEM}
a(\u,v)=\int_\Omega fv\dx\qquad\forall v\in\V.
\end{equation}
As in the continuous case, we generally suppose that, for a given $hp$-space~$\V$, a unique numerical solution~$\u\in\V$ of~\eqref{eq:hpFEM}
exists.

Furthermore, let us introduce the following norm on~$H^1_0(\Omega)$:
\begin{equation}\label{eq:norm}
\NNN{v}^2:=\sum_{j=1}^N\NNN{v}_{K_j}^2
:=\sum_{j=1}^N\left(\e\NN{v'}^2_{L^2(K_j)}+\NN{\sqrt{|d|} v}^2_{L^2(K_j)}\right).
\end{equation}
We note that, if~$d\ge 0$ on~$\Omega$, then the norm~$\NNN{\cdot}$
equals the natural energy norm corresponding to the bilinear
form~$a(\cdot,\cdot)$ from~\eqref{eq:var}. More precisely, in that
case we have that~$a(v,v)=\NNN{v}^2$ for any~$v\in H^1_0(\Omega)$.


\section{Robust A Posteriori Error Analysis}\label{sc:analysis}

The goal of this section is to derive an \emph{a posteriori} error analysis for
the $hp$-FEM~\eqref{eq:hpFEM} with respect to the residual
\[
\R:=\sup_{\genfrac{}{}{0pt}{}{v\in H^1_0(\Omega)}{v\not\equiv 0}}\frac{|a(u-\u,v)|}{\NNN{v}},
\]
where~$u\in H^1_0(\Omega)$ and~$\u\in\V$ are the exact and numerical
solutions of~\eqref{eq:var} and~\eqref{eq:hpFEM}, respectively, and~$\err=u-\u$ signifies
the error. Again, let us notice that, if~$d\ge 0$, then the
residual~$\R$ equals the norm~$\NNN{\err}$ of the error.

In order to state our main result, let us denote by~$\P$,
for~$j=1,2,\ldots,N$, the elementwise $L^2$-projection
onto~$\mathbb{P}_{p_j}(K_j)$.  Moreover, let
\[
\jmp{\u'}(x_j)=\u'(x_j^+)-\u'(x_j^-) =\lim_{x\searrow x_j}
u'(x)-\lim_{x\nearrow x_j} u'(x),\qquad 1\le j\le N-1,
\]
signify the jump of~$\u'$ at the mesh point~$x_j$, and
define~$\jmp{\u'}(x_0)=\jmp{\u'}(x_N)=0$.

\subsection{Main Result}

We shall prove the following \emph{a posteriori} error bound:

\begin{theorem}\label{thm:main}
For the error~$\err=u-\u$ between the exact solution~$u\in
H^1_0(\Omega)$ of~\eqref{eq:var} and its numerical
approximation~$\u\in\V$ from~\eqref{eq:hpFEM}, there holds the
following a posteriori error estimate:
\begin{equation}\label{eq:main}
  \R^2\le C\sum_{j=1}^N\eta_{K_j}^2.
\end{equation}
Here, for~$j=1,2,\ldots,N$,
\begin{equation}\label{eq:eta}
\begin{split}
  \eta_{K_j}^2:=\alpha_j&\left(\NN{\P
      f+\e\u''-d\u}_{L^2(K_j)}^2+\NN{f-\P f}^2_{L^2(K_j)}\right)\\
&\quad+\frac12\e^2\gamma_{j-1}\left|\jmp{\u'}(x_{j-1})\right|^2
    +\frac12\e^2\gamma_j\left|\jmp{\u'}(x_j)\right|^2
\end{split}
\end{equation}
are local error indicators, where we let
\begin{align}\label{eq:alpha}
\alpha_j&=
\begin{cases}
\min\left\{\e^{-1}h_j^2p_j^{-2},\|\nicefrac{1}{d}\|_{L^\infty(\widetilde K_j)}\right\},&\text{if }\nicefrac{1}{d}\in L^\infty(\widetilde K_j),\\
\e^{-1}h_j^2p_j^{-2},&\text{otherwise},
\end{cases}
\end{align}
(with obvious modifications if $j = 0$ or $j = N$),
and
\begin{align}
\beta_j&=\alpha_jh_j^{-1}+2\sqrt{\e^{-1}\alpha_j}.\label{eq:beta}
\end{align}
Moreover, 
\begin{equation}\label{eq:gamma}
\gamma_j=\frac{\beta_j\beta_{j+1}}{\beta_j+\beta_{j+1}},
\end{equation}
for~$1\le j\le N-1$, and~$\gamma_0=\gamma_N=0$. The constant~$C>0$ is
independent of~$u$, $\u$, $f$, $\e$, $\T$, and of~$\p$.
\end{theorem}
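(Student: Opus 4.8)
The plan is to represent the residual functional $v\mapsto a(\err,v)$ explicitly by elementwise integration by parts, and then to estimate it against $\NNN{v}$ by testing with the interpolation error of a suitable $hp$-quasi-interpolation operator $\intp\colon H^1_0(\Omega)\to\V$. First I would fix $v\in H^1_0(\Omega)$ and use $a(u,v)=\int_\Omega fv\dx$ to write $a(\err,v)=\int_\Omega fv\dx-a(\u,v)$. Integrating $\e\int_{K_j}\u'v'\dx$ by parts on each element (note that $\u''$ is a genuine polynomial there) and collecting the boundary contributions at the interior nodes, the endpoint values combine into the jumps $\e\jmp{\u'}(x_j)v(x_j)$, while the contributions at $a,b$ vanish since $v(a)=v(b)=0$. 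This gives
\begin{equation*}
a(\err,v)=\sum_{j=1}^N\int_{K_j}\left(f+\e\u''-d\u\right)v\dx+\sum_{j=1}^{N-1}\e\jmp{\u'}(x_j)\,v(x_j).
\end{equation*}
By Galerkin orthogonality $a(\err,\vhp)=0$ for all $\vhp\in\V$ (obtained by subtracting \eqref{eq:hpFEM} from \eqref{eq:var}), I may replace $v$ by $v-\intp v$ throughout this identity.

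The crux is the $hp$-quasi-interpolation operator $\intp$, which is where the interpolation results of this section enter. I would require that it be simultaneously $L^2$- and locally $H^1$-stable and satisfy, with a constant depending only on $\mu$,
\begin{equation*}
\NN{v-\intp v}_{L^2(K_j)}^2\le C\alpha_j\NNN{v}_{\widetilde K_j}^2,
\qquad
\N{(v-\intp v)(x)}^2\le C\beta_j\NNN{v}_{\widetilde K_j}^2\quad(x\in\partial K_j),
\end{equation*}
where $\NNN{\cdot}_{\widetilde K_j}$ is the restriction of \eqref{eq:norm} to the patch $\widetilde K_j$. The first bound reproduces $\alpha_j$ of \eqref{eq:alpha}: the option $\e^{-1}h_j^2p_j^{-2}$ comes from the $hp$-approximation estimate together with $\e^{1/2}\NN{v'}\le\NNN{v}$, and the option $\NN{\nicefrac1d}_{L^\infty(\widetilde K_j)}$ from $L^2$-stability together with $\NN{v}_{L^2(\widetilde K_j)}^2\le\NN{\nicefrac1d}_{L^\infty(\widetilde K_j)}\NN{\sqrt{|d|}v}_{L^2(\widetilde K_j)}^2$, so that taking the smaller yields the minimum in \eqref{eq:alpha}. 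The second bound follows from the multiplicative trace inequality applied to $w=v-\intp v$, using the $L^2$-bound for the $h_j^{-1}\NN{w}_{L^2(K_j)}^2$ term and $\NN{w'}_{L^2(K_j)}\lesssim\e^{-1/2}\NNN{v}_{\widetilde K_j}$ for the $\NN{w}_{L^2(K_j)}\NN{w'}_{L^2(K_j)}$ term, which reproduces $\beta_j=\alpha_jh_j^{-1}+2\sqrt{\e^{-1}\alpha_j}$ of \eqref{eq:beta} up to the constant $C$.

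With these in hand the assembly is routine. For the element term I split $f=\P f+(f-\P f)$ and apply Cauchy--Schwarz together with the first interpolation bound, producing the factor $\alpha_j(\NN{\P f+\e\u''-d\u}_{L^2(K_j)}^2+\NN{f-\P f}_{L^2(K_j)}^2)$. For the jump term the key point is that the node $x_j$ borders both $K_j$ and $K_{j+1}$, so the second bound yields $\N{(v-\intp v)(x_j)}^2\le C\beta_j\NNN{v}_{\widetilde K_j}^2$ and also $\le C\beta_{j+1}\NNN{v}_{\widetilde K_{j+1}}^2$; since $\gamma_j^{-1}=\beta_j^{-1}+\beta_{j+1}^{-1}$ by \eqref{eq:gamma}, adding these gives $\gamma_j^{-1}\N{(v-\intp v)(x_j)}^2\le C(\NNN{v}_{\widetilde K_j}^2+\NNN{v}_{\widetilde K_{j+1}}^2)$, exactly what is needed to pair the jumps against $\e^2\gamma_j\N{\jmp{\u'}(x_j)}^2$ in a weighted Cauchy--Schwarz inequality. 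Summing over all elements and nodes, invoking the bounded overlap of the patches $\widetilde K_j$ from $\mu$-shape regularity \eqref{eq:gamma-shape-regular} to get $\sum_j\NNN{v}_{\widetilde K_j}^2\le C\NNN{v}^2$, and then dividing by $\NNN{v}$ and taking the supremum yields \eqref{eq:main}; the factor $\tfrac12$ in \eqref{eq:eta} simply distributes each interior jump to its two adjacent element indicators.

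I expect the main obstacle to be the construction of $\intp$ with the fully $p$- and $\e$-explicit bounds above, in particular obtaining the sharp $\beta_j$ from the trace inequality and securing $L^2$- and $H^1$-stability simultaneously and robustly in $\e$. Once these interpolation estimates are established, the passage to \eqref{eq:main} is the weighted Cauchy--Schwarz argument sketched above.
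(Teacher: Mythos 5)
Your plan coincides with the paper's own proof: elementwise integration by parts plus Galerkin orthogonality to test the residual with $v-\intp v$, the same patch-local bounds $\NN{v-\intp v}_{L^2(K_j)}^2\lesssim\alpha_j\NNN{v}_{\widetilde K_j}^2$ and nodal bound via the multiplicative trace inequality giving $\beta_j$, the same harmonic-mean combination yielding $\gamma_j$ (the paper writes it as a convex combination, which is the identical computation), and the same weighted Cauchy--Schwarz and finite-patch-overlap assembly. The interpolation operator you flag as the main obstacle is exactly what the paper supplies separately (Proposition~\ref{pr:intp} and Corollary~\ref{cr:int}, built by a partition-of-unity argument), so your proposal is correct and essentially identical in structure.
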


\begin{remark}\label{rm:constants}
We emphasize that the constants~$\alpha_j$ (provided that~$\|\nicefrac{1}{d}\|_{L^\infty(\widetilde K_j)}<\infty$) and~$\varepsilon^2\alpha_j\gamma_j$ appearing in the error indicators~$\eta_{K_j}$ from~\eqref{eq:eta} remain bounded as~$h_j,\varepsilon\to 0$ (and~$p_j\to\infty$). We also note that 
$\frac{1}{2} \min\{\beta_j,\beta_{j+1}\} \leq \gamma_j \leq \min\{\beta_j,\beta_{j+1}\}$ and that 
$2\sqrt{\varepsilon^{-1} \alpha_j } \leq \beta_j \leq 3 \sqrt{\varepsilon^{-1} \alpha_j}$.
\end{remark}

\subsection{$hp$-Interpolation}\label{sc:int}

For the proof of the above Theorem~\ref{thm:main} the construction of a suitable $hp$-interpolation operator is crucial. In particular, in order to derive an (upper) \emph{a posteriori} error estimate on the error~$\err$ that is robust with respect to the singular perturbation parameter~$\e$ as well as optimally scaled with respect to the local element sizes~$h_j$ and polynomial degrees~$p_j$, an interpolant that is \emph{simultaneously} $L^2$- and $H^1$-stable is required. This will be accomplished in the current section (Proposition~\ref{pr:intp} and Corollary~\ref{cr:int}).

\begin{proposition}\label{pr:intp} 
Let the pair $(\T,\p)$ be $\mu$-shape regular 
(see \eqref{eq:gamma-shape-regular}) and~$v\in H^1_0(\Omega)$. Then, there exists an interpolant~$\intp
v\in\V$ of~$v$ such that, for any~$j=1,2,\ldots,N$, there holds
\begin{equation}\label{eq:CI}
\begin{split}
\NN{v-\intp v}_{L^2(K_j)}&\le C_I\NN{v}_{L^2(\widetilde K_j)},\qquad
\NN{v-\intp v}_{L^2(K_j)}\le C_I\frac{h_j}{p_j}\NN{v'}_{L^2(\widetilde K_j)},\\
\NN{(v-\intp v)'}_{L^2(K_j)}&\le C_I\NN{v'}_{L^2(\widetilde K_j)}.
\end{split}
\end{equation}
Furthermore, we have the nodal estimates 
\begin{equation*}
\begin{split}
|(v - \intp v)(x_i)|^2 & \leq 
C_I \Bigl[ \frac{1} {h_i + h_{i+1}} \|v - \intp v\|_{L^2(K_i \cup K_{i+1})}^2 \\ 
& \qquad+  
  \|v - \intp v\|_{L^2(K_i \cup K_{i+1})}^2 
  \|(v - \intp v)^\prime\|_{L^2(K_i \cup K_{i+1})}^2 \Bigr]. 
\end{split}
\end{equation*}
Here, $C_I>0$ is a constant that depends solely on $\mu$; in particular, 
it is independent of~$v$, $\mathcal{T}$, and of~$\bm p$. 
\end{proposition}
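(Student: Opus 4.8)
The plan is to construct $\intp$ as a one-dimensional $hp$-version of the Scott--Zhang (Cl\'ement-type) quasi-interpolant, in the spirit of \cite{MelenkAPOST01}, and then to read off the four assertions from its local stability and approximation properties. Since $\V\subset H^1_0(\Omega)$ consists of continuous piecewise polynomials, I would split the construction into a low-order conforming part and element-interior high-order corrections. For each interior node $x_i$ I associate one adjacent element and define an $L^2$-duality-based averaged nodal functional $\lambda_i$ on it (the $L^2$-dual of the linear nodal shape function), and I set the low-order part $\intp^{(1)}v=\sum_{i=1}^{N-1}\lambda_i(v)\phi_i$, where $\phi_i$ is the continuous piecewise-linear hat function at $x_i$; the boundary functionals are set to zero so that $\intp^{(1)}v(a)=\intp^{(1)}v(b)=0$. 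On each element $K_j$ I then add the mesh-point-vanishing (bubble) part of the local $L^2$-projection $\P(v-\intp^{(1)}v)$, i.e.\ its orthogonal complement to the endpoint-interpolating linear; since bubbles vanish at the mesh points, this correction changes neither the nodal values nor the homogeneous boundary conditions, so $\intp v\in\V$.

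With this operator in hand, the three estimates in \eqref{eq:CI} reduce to local statements on the patch $\widetilde K_j$. The $L^2$-stability bound $\NN{\intp v}_{L^2(K_j)}\le C\NN{v}_{L^2(\widetilde K_j)}$ follows from the $L^2$-boundedness of the functionals $\lambda_i$ together with the $L^2$-boundedness of $\P$; subtracting $v$ gives the first estimate. The $H^1$-stability $\NN{(v-\intp v)'}_{L^2(K_j)}\le C\NN{v'}_{L^2(\widetilde K_j)}$ is obtained analogously by combining the $H^1$-stability of $\intp^{(1)}$ with that of the high-order correction. The $h/p$-estimate then comes from the fact that $\intp$ reproduces polynomials up to the local degree: by the usual quasi-interpolation argument one replaces $v$ by $v-q$ for a reproduced $q\in\mathbb{P}_{p_j}$ and uses local $L^2$-stability to obtain $\NN{v-\intp v}_{L^2(K_j)}\le C\inf_{q\in\mathbb{P}_{p_j}}\NN{v-q}_{L^2(\widetilde K_j)}$, which is bounded by $C\tfrac{h_j}{p_j}\NN{v'}_{L^2(\widetilde K_j)}$ via the standard degree-explicit best-approximation estimate. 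Throughout, $\mu$-shape regularity~\eqref{eq:gamma-shape-regular} is used to compare $h$ and $p$ on neighbouring elements whenever a nodal functional reaches into an adjacent element, which is exactly what makes $C_I$ depend only on~$\mu$.

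I expect the genuine obstacle to be the $p$-robustness of the $H^1$-stability of the high-order correction: one must show that the local $L^2$-projection $\P$, and hence the bubble correction built from it, is bounded in $H^1$ by a constant independent of the polynomial degree. In one space dimension this is true --- the $L^2$-projection onto $\mathbb{P}_{p_j}$ is $H^1$-stable uniformly in $p_j$ --- but it is not a formal consequence of $L^2$-orthogonality and has to be isolated as a separate degree-explicit polynomial lemma (or imported from \cite{MelenkAPOST01}). This is precisely the difficulty that rules out the obvious alternatives: a plain projection-based (antiderivative) interpolant is $H^1$-stable and optimally $h/p$-accurate but fails the $L^2$-stability bound, since its nodal matching forces a term in $\NN{v'}$ rather than $\NN{v}$; conversely the pure $H^1$-bubble projection is $H^1$-stable but only $L^2$-bounded in terms of $\NN{v'}$. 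Demanding $L^2$- and $H^1$-stability simultaneously with $p$-independent constants is exactly what the Scott--Zhang splitting, resting on the one-dimensional uniform $H^1$-stability of the $L^2$-projection, delivers.

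Finally, the nodal estimate is independent of the fine structure of $\intp$ and follows from a one-dimensional multiplicative trace (Gagliardo--Nirenberg) inequality. Writing $w=v-\intp v\in H^1(K_i\cup K_{i+1})$ and $\ell=h_i+h_{i+1}$, for $\eta,\xi\in\overline{K_i\cup K_{i+1}}$ one has $w(\xi)^2-w(\eta)^2=\int_\eta^\xi 2ww'$; averaging over $\eta$ and applying Cauchy--Schwarz yields $w(x_i)^2\le \ell^{-1}\NN{w}_{L^2(K_i\cup K_{i+1})}^2+2\NN{w}_{L^2(K_i\cup K_{i+1})}\NN{w'}_{L^2(K_i\cup K_{i+1})}$, which is of the asserted form (with $C_I\ge 2$). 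Here only the $H^1$-conformity of $\intp v$ --- so that the trace $w(x_i)$ is single-valued --- enters.
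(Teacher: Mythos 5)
Your construction has two genuine gaps, and each one by itself breaks the proposal. The first concerns the $h_j/p_j$ estimate in \eqref{eq:CI}. Your nodal functionals $\lambda_i$ are dual to \emph{linear} shape functions, so $\lambda_i(q)=q(x_i)$ holds only for $q\in\mathbb{P}_1$; consequently $\intp$ reproduces piecewise linears, \emph{not} polynomials of the local degree $p_j$, and the quasi-interpolation argument you invoke collapses. The defect cannot be repaired by the bubble correction, because bubbles vanish at the mesh points: the error of $\intp v$ at a node is exactly $v(x_i)-\lambda_i(v)$, which with linear duality is of size $h^{\nicefrac12}\NN{v'}_{L^2}$ with no gain in $p$. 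Concretely, take $v$ equal to $\bigl((x-x_j)/h_j\bigr)^2$ on $\widetilde K_j$ and $p_j\ge 2$. Then $w:=v-\intp^{(1)}v$ is a quadratic on $K_j$, so $\P w=w$, and your correction leaves exactly $v-\intp v=L_jw$ on $K_j$, where $L_jw$ denotes the linear function matching $w$ at $x_{j-1}$ and $x_j$; its endpoint values are the nodal errors $v(x_i)-\lambda_i(v)$, nonzero numbers of order one (for instance $\nicefrac{1}{6}$ in the reference configuration) and independent of $p_j$. Hence $\NN{v-\intp v}_{L^2(K_j)}\ge c\,h_j^{\nicefrac12}$, while the asserted bound $C_I(h_j/p_j)\NN{v'}_{L^2(\widetilde K_j)}\sim h_j^{\nicefrac12}/p_j$ tends to zero as $p_j\to\infty$: contradiction. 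The obvious repair---making $\lambda_i$ dual to the nodal shape function within $\mathbb{P}_{p}$---forfeits the first estimate instead, since point evaluation on $\mathbb{P}_p$ has $L^2$-norm $(p+1)/\sqrt{2}$ on the reference interval (the same growth afflicts your extraction of the endpoint-interpolating linear from a degree-$p$ polynomial). This trade-off between nodal accuracy and $p$-uniform $L^2$-bounds is precisely the obstruction that nodal-functional/projection constructions cannot circumvent.

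The second gap is that the ``degree-explicit polynomial lemma'' on which you rest the $H^1$-bound is false: the $L^2(-1,1)$-projection $\Pi_p$ onto $\mathbb{P}_p$ is \emph{not} $H^1$-stable uniformly in $p$; the sharp loss is $p^{\nicefrac12}$ and it is attained. Indeed, let $u=P_p-P_{p+2}$ (Legendre polynomials). From $P_{p+2}'=P_p'+(2p+3)P_{p+1}$ one gets $u'=-(2p+3)P_{p+1}$, hence $\NN{u'}_{L^2}^2=2(2p+3)$, whereas $\Pi_pu=P_p$ and $\NN{(\Pi_pu)'}_{L^2}^2=\NN{P_p'}_{L^2}^2=p(p+1)$, so that $\NN{(\Pi_pu)'}_{L^2}\big/\NN{u'}_{L^2}\sim\tfrac12\sqrt{p}$. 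So the uniform $H^1$-stability of your high-order correction is not available from any source. Both failures have the same root: the inter-element continuity constraint is the hard part of the problem, not an afterthought, and it is exactly what the paper's construction is built around. There, patchwise polynomial quasi-interpolants $J^iv$---which \emph{can} be made simultaneously $L^2$-stable, $H^1$-stable and $(h/p)$-accurate precisely because no nodal or continuity condition is imposed on them, cf.\ \cite[Proposition~{A.2}]{MelenkCLEM}---are blended through the hat functions as a partition of unity, $\intp v=\sum_i\varphi_iJ^iv$, with antisymmetric extension at the boundary; continuity and the boundary conditions then hold automatically, the nodal value is simply $(J^iv)(x_i)$, and the derivative bound costs only $\NN{\varphi_i'}_{L^\infty}\NN{v-J^iv}_{L^2}\lesssim\NN{v'}_{L^2}$. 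Your final paragraph (the multiplicative trace inequality for the nodal bound) is correct and coincides with the paper's argument, but it is the only part of the proposal that survives.
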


\begin{proof} 
Let us, without loss of generality, assume that~$\Omega=(0,1)$. The result can be shown with the techniques developed for the higher-dimensional
case in \cite{karkulik-melenk12,karkulik-melenk-rieder14}. In the present, one-dimensional case, 
a simpler argument can be brought to bear. Let $x_{-1} = -h_1$ and 
$x_{N+1} = 1+h_N$ and $\varphi_i$, $i=0,\ldots,N+1$ be the 
standard piecewise linear hat functions associated with the nodes 
$x_i$, $i=-1,\ldots,N+1$. The extra nodes $x_{-1}$ and $x_{N+1}$ define 
in a natural way the elements $K_{0}$ and $K_{N+1}$. 
The (open) patches $\omega_i$, $i=0,\ldots,N$, are given by the supports of the 
functions $\varphi_i$, i.e., $\omega_i = (\operatorname*{supp} \varphi_i)^\circ
 = K_i \cup K_{i+1} \cup \{x_i\}$. 

Polynomial approximation (see, e.g., \cite[Proposition~{A.2}]{MelenkCLEM}) gives
the existence of a interpolation operator 
$J_p:L^2(-1,1) \rightarrow {\mathbb P}_p(-1,1)$ that is uniformly 
(in $p\ge 0$)
stable, i.e., $\|J_p v\|_{L^2(-1,1)} \leq C \|v\|_{L^2(-1,1)}$ for all 
$v \in L^2(-1,1)$ and has the following properties for $v \in H^1(-1,1)$: 
\[
(p+1) \|v - J_p v\|_{L^2(-1,1)} + 
\|(v - J_p v)^\prime\|_{L^2(-1,1)} \leq  C \|v^\prime\|_{L^2(-1,1)}.  
\]
Furthermore, if $v$ is antisymmetric with respect to the midpoint $x = 0$, 
then $J_p v$ can be assumed to be antisymmetric as well, i.e., 
$(J_p v)(0) = 0$ (this follows from studying the antisymmetric part 
of the original function $J_p v$). 

The approximation $\intp v$ is now constructed with 
the aid of a ``partition of unity argument'' as described in 
\cite[Theorem~{2.1}]{babuska-melenk96}. For $\omega_0$ and $\omega_N$, 
extend $v$ anti-symmetrically, i.e., $v(x):=-v(-x)$ for $x \in K_{0}$
and $v(x):=-v(1-x)$ for $x \in K_{N+1}$. Then $v$ is defined on each
patch $\omega_i$, $i=0,\ldots,N$. For each patch $\omega_i$, 
let $p^\prime_i:= \min\{p_{i},p_{i+1}\}$ (with the understanding 
$p_0 = p_1$ and $p_{N+1} = p_N$). The above operator $J_p$ then induces
for each patch $\omega_i$ by scaling an operator 
$J^i:L^2(\omega_i) \rightarrow {\mathcal P}_{p^\prime_i-1}(\omega_i)$ 
with the following properties:  
\[
\frac{p_i^\prime+1}{h_i}\|v - J^i v\|_{L^2(\omega_i)} + 
\|(v - J^i v)^\prime\|_{L^2(\omega_i)} \leq  C \|v^\prime\|_{L^2(\omega_i)};  
\]
here, we have exploited the $\mu$-shape regularity of the mesh. 
We note that $(J^0 v)(0) = 0$ and $(J^N v)(1) = 0$. Also, the operators 
$J^i$ are uniformly (in the polynomial degree) stable in $L^2(\omega_i)$. 
The approximation $\intp v$ is now taken to be 
$\intp v := \sum_{i=0}^N \varphi_i J^i v$. The desired approximation 
properties follow now from \cite[Theorem~{2.1}]{babuska-melenk96}. 

Finally, the nodal estimate results from the observation 
that at the mesh nodes, there holds the identity $\intp v(x_i) = (J^i v)(x_i)$, and from
a multiplicative trace inequality (see Appendix~\ref{sc:AA}, Lemma~\ref{pr:trace}). 
\end{proof}

The above proposition implies the following bounds.

\begin{corollary}\label{cr:int}
For~$v\in H^1_0(\Omega)$, the interpolant from Proposition~\ref{pr:intp} satisfies
\begin{equation*}
\NN{v-\intp v}^2_{L^2(K_j)}\le
C^2_I\alpha_j\NNN{v}_{\widetilde K_j}^2 ,
\qquad j=1,2,\ldots,N,
\end{equation*}
and
\begin{equation*}
\big|(v-\intp v)(x_j)\big|^2\le
C_I^2\gamma_j\left(\NNN{v}_{\widetilde K_{j}}^2+\NNN{v}^2_{\widetilde K_{j+1}}\right),
\qquad j=1,2,\ldots,N-1,
\end{equation*}
where~$\alpha_j$ and~$\gamma_j$ are defined
in~\eqref{eq:alpha} and~\eqref{eq:gamma}, respectively, and~$C_I$ is the constant
from~\eqref{eq:CI}.
\end{corollary}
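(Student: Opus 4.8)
The plan is to read both inequalities off the approximation properties of $\intp$ from Proposition~\ref{pr:intp}, passing them through the two estimates built into the energy norm, namely $\e\NN{v'}^2_{L^2(D)}\le\NNN{v}^2_D$ and $\NN{\sqrt{|d|}\,v}^2_{L^2(D)}\le\NNN{v}^2_D$ for any $D\subseteq\Omega$. Throughout, set $w:=v-\intp v$.

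For the elementwise $L^2$-bound I would exploit the two $L^2$-estimates in~\eqref{eq:CI} in parallel. The approximation estimate $\NN{w}_{L^2(K_j)}\le C_I\frac{h_j}{p_j}\NN{v'}_{L^2(\widetilde K_j)}$, combined with $\NN{v'}^2_{L^2(\widetilde K_j)}\le\e^{-1}\NNN{v}^2_{\widetilde K_j}$, gives $\NN{w}^2_{L^2(K_j)}\le C_I^2\,\e^{-1}h_j^2p_j^{-2}\NNN{v}^2_{\widetilde K_j}$; this is always available. When in addition $\nicefrac{1}{d}\in L^\infty(\widetilde K_j)$, the stability estimate $\NN{w}_{L^2(K_j)}\le C_I\NN{v}_{L^2(\widetilde K_j)}$ together with the weighting $\NN{v}^2_{L^2(\widetilde K_j)}\le\|\nicefrac{1}{d}\|_{L^\infty(\widetilde K_j)}\NN{\sqrt{|d|}\,v}^2_{L^2(\widetilde K_j)}\le\|\nicefrac{1}{d}\|_{L^\infty(\widetilde K_j)}\NNN{v}^2_{\widetilde K_j}$ supplies the second bound $\NN{w}^2_{L^2(K_j)}\le C_I^2\|\nicefrac{1}{d}\|_{L^\infty(\widetilde K_j)}\NNN{v}^2_{\widetilde K_j}$. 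Retaining the smaller of the available bounds reproduces exactly the definition~\eqref{eq:alpha} of $\alpha_j$, which proves the first assertion.

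For the nodal estimate I would not invoke the combined patch form from Proposition~\ref{pr:intp} directly, but instead apply the one-sided multiplicative trace inequality (Lemma~\ref{pr:trace}) on each of the two elements sharing the node $x_j$. On $K_j$, evaluating at the right endpoint, this reads $|w(x_j)|^2\le h_j^{-1}\NN{w}^2_{L^2(K_j)}+2\NN{w}_{L^2(K_j)}\NN{w'}_{L^2(K_j)}$. Feeding in the element bound $\NN{w}^2_{L^2(K_j)}\le C_I^2\alpha_j\NNN{v}^2_{\widetilde K_j}$ just obtained, and $\NN{w'}^2_{L^2(K_j)}\le C_I^2\e^{-1}\NNN{v}^2_{\widetilde K_j}$ (from the $H^1$-estimate in~\eqref{eq:CI} and $\e\NN{v'}^2\le\NNN{v}^2$), yields $|w(x_j)|^2\le C_I^2(\alpha_jh_j^{-1}+2\sqrt{\e^{-1}\alpha_j})\NNN{v}^2_{\widetilde K_j}=C_I^2\beta_j\NNN{v}^2_{\widetilde K_j}$, with $\beta_j$ as in~\eqref{eq:beta}. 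The identical computation on $K_{j+1}$, evaluated at its left endpoint $x_j$, gives the symmetric bound $|w(x_j)|^2\le C_I^2\beta_{j+1}\NNN{v}^2_{\widetilde K_{j+1}}$.

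It remains to interpolate between these two one-sided bounds. Abbreviating $A:=\NNN{v}^2_{\widetilde K_j}$ and $B:=\NNN{v}^2_{\widetilde K_{j+1}}$, I have $|w(x_j)|^2\le C_I^2\beta_jA$ and $|w(x_j)|^2\le C_I^2\beta_{j+1}B$; forming the convex combination with weight $\theta=\beta_{j+1}/(\beta_j+\beta_{j+1})$ on the first and $1-\theta=\beta_j/(\beta_j+\beta_{j+1})$ on the second equalizes both coefficients to $\gamma_j=\beta_j\beta_{j+1}/(\beta_j+\beta_{j+1})$, whence $|w(x_j)|^2\le C_I^2\gamma_j(A+B)$, which is the second assertion. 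The one genuinely delicate point is this final balancing: recognizing that the harmonic-mean weight $\gamma_j$ of~\eqref{eq:gamma} is precisely what two independent one-sided traces produce, and that treating the two elements separately---rather than through the combined patch estimate---is what keeps the contributions from $\widetilde K_j$ and $\widetilde K_{j+1}$ cleanly decoupled.
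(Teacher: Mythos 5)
Your proof is correct and follows essentially the same route as the paper: the first bound is obtained by combining the two $L^2$-estimates of \eqref{eq:CI} with the weightings $\e\NN{v'}^2\le\NNN{v}^2$ and $\NN{v}^2\le\|\nicefrac{1}{d}\|_{L^\infty}\NN{\sqrt{|d|}\,v}^2$ and taking the minimum, and the nodal bound is obtained exactly as in the paper, namely by applying Lemma~\ref{pr:trace} separately on $K_j$ and $K_{j+1}$ to get the $\beta_j$- and $\beta_{j+1}$-bounds and then forming the convex combination that produces the harmonic-mean weight $\gamma_j$. The only cosmetic difference is that you insert the element bound $\NN{w}^2_{L^2(K_j)}\le C_I^2\alpha_j\NNN{v}^2_{\widetilde K_j}$ before estimating the cross term, whereas the paper estimates $\NN{(v-\intp v)'}_{L^2(K_j)}\le C_I\NN{v'}_{L^2(\widetilde K_j)}$ first; the resulting inequalities are identical.
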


\begin{proof}
We proceed along the lines of~\cite{Ve1998}. Using the bounds from
Proposition~\ref{pr:intp}, we have for each element~$K_j\in\T$ that
\begin{align*}
\NN{v-\intp v}^2_{L^2(K_j)}&\le 
C^2_I\frac{h_j^2}{\e p_j^2}\e\NN{v'}^2_{L^2(\widetilde K_j)}.
\end{align*}
Furthermore, if~$\nicefrac{1}{d} \in L^\infty(\widetilde K_j)$, then
\begin{align*}
\NN{v-\intp v}^2_{L^2(K_j)}
&\le C^2_I\NN{v}_{L^2(\widetilde K_j)}^2\le C^2_I
\NN{\nicefrac{1}{d}}_{L^\infty(\widetilde K_j)}\NN{\sqrt{|d|}v}_{L^2(\widetilde K_j)}^2.
\end{align*}
Combining these two estimates, yields the first bound.
 
In order to prove the second estimate, we apply, for~$1\le j\le N-1$,
a multiplicative trace inequality (see Appendix~\ref{sc:AA},
Lemma~\ref{pr:trace}):
\begin{align*}
\big|(v&-\intp v)(x_j)\big|^2\\
&\le h_j^{-1}\NN{v-\intp v}^2_{L^2(K_j)}+2\NN{v-\intp
  v}_{L^2(K_j)}\NN{(v-\intp v)'}_{L^2(K_j)}.
\end{align*}
Then, invoking the above bounds as well as the estimates from
Proposition~\ref{pr:intp}, we get 
\begin{align*}
\big|(v-\intp v)(x_j)\big|^2
&\le C_I^2\left(\alpha_jh_j^{-1}\NNN{v}_{\widetilde K_j}^2
+2\sqrt{\alpha_j}\NNN{v}_{\widetilde K_j}\NN{v'}_{L^2(\widetilde K_j)}\right)\\
&\le C_I^2\left(\alpha_jh_j^{-1}\NNN{v}_{\widetilde K_j}^2
+2\sqrt{\e^{-1}\alpha_j}\NNN{v}_{\widetilde K_j}^2\right)\\
&\le C_I^2\beta_j\NNN{v}_{\widetilde K_j}^2,
\end{align*}
with $\beta_j$ from~\eqref{eq:beta}. Since~$x_j$ is also a boundary
point of~$K_{j+1}$, we similarly obtain that
\[
\big|(v-\intp v)(x_j)\big|^2
\le C_I^2\beta_{j+1}\NNN{v}_{\widetilde K_{j+1}}^2.
\]
Therefore,
\begin{align*}
\big|(v-\intp v)(x_j)\big|^2
&=\frac{\beta_{j+1}}{\beta_{j}+\beta_{j+1}}\big|(v-\intp
v)(x_j)\big|^2 +\frac{\beta_{j}}{\beta_{j}+\beta_{j+1}}\big|(v-\intp
v)(x_j)\big|^2\\ &\le
C_I^2\gamma_j\left(\NNN{v}_{\widetilde K_{j}}^2+\NNN{v}^2_{\widetilde K_{j+1}}\right),
\end{align*}
with~$\gamma_j$ from~\eqref{eq:gamma}. Thus, we have shown the second
estimate.
\end{proof}

\subsection{Proof of Theorem~\ref{thm:main}}

We are now in a position to prove the $hp$-\emph{a posteriori} error
bound~\eqref{eq:main}. 

From the definitions of the exact solution~$u$ from~\eqref{eq:var} and
the numerical solution~$\u$ defined in~\eqref{eq:hpFEM}, it follows
that, for any~$v\in H^1_0(\Omega)$ and any~$\vhp\in\V$,
\begin{align*}
a(u,v)-a(\u,v)&=a(u,v-\vhp)-a(\u,v-\vhp)\\
&=\int_\Omega f(v-\vhp)\dx-\e\int_\Omega \u'(v-\vhp)'\dx-\int_\Omega d\u(v-\vhp)\dx.
\end{align*}
Integrating by parts elementwise in the second integral leads to
\begin{align*}
\int_\Omega &\u'(v-\vhp)'\dx
=\sum_{j=1}^N\int_{K_j}\u'(v-\vhp)'\dx\\
&=-\sum_{j=1}^N\int_{K_j}\u''(v-\vhp)\dx
+\sum_{j=1}^N\left(\u'(x^-_j)(v-\vhp)(x_j)
-\u'(x^+_{j-1})(v-\vhp)(x_{j-1})\right)\\
&=-\sum_{j=1}^N\int_{K_j}\u''(v-\vhp)\dx-\sum_{j=1}^{N-1}\jmp{\u'}(x_j)(v-\vhp)(x_j),
\end{align*}
and thus, choosing $\vhp=\intp v$ to be the $hp$-interpolant from Section~\ref{sc:int}, we arrive at
\begin{align*}
a(u,v)-a(\u,v)
&=\sum_{j=1}^N\left(\P f+\e\u''-d\u\right)(v-\intp v)\dx\\
&\quad+\sum_{j=1}^N\left(f-\P f\right)(v-\intp v)\dx
+\e\sum_{j=1}^{N-1}\jmp{\u'}(x_j)(v-\intp v)(x_j).
\end{align*}
Hence, applying the Cauchy-Schwarz inequality, we obtain
\begin{align*}
|a(u,v)-a(\u,v)|
&\le
\sum_{j=1}^N\NN{\P f+\e\u''-d\u}_{L^2(K_j)}\NN{v-\intp v}_{L^2(K_j)}\\
&\quad+\sum_{j=1}^N\NN{f-\P f}_{L^2(K_j)}\NN{v-\intp v}_{L^2(K_j)}\\
&\quad+\sum_{j=1}^{N-1}\e\N{\jmp{\u'}(x_j)}\N{(v-\intp v)(x_j)}.
\end{align*}
The bounds from Corollary~\ref{cr:int} lead to
\begin{align*}
  |a(u,v)-a(\u,v)| &\le
  C_I\sum_{j=1}^N\sqrt{\alpha_j}\NN{\P f+\e\u''-d\u}_{L^2(K_j)}\NNN{v}_{\widetilde K_j}\\
  &\quad+C_I\sum_{j=1}^N\sqrt{\alpha_j}\NN{f-\P f}_{L^2(K_j)}\NNN{v}_{\widetilde K_j}\\
  &\quad+C_I\sum_{j=1}^{N-1}\left(\NNN{v}_{\widetilde K_{j}}^2+\NNN{v}^2_{\widetilde K_{j+1}}\right)^{\nicefrac12}\e\sqrt{\gamma_j}\N{\jmp{\u'}(x_j)}.
\end{align*}
The Cauchy-Schwarz inequality yields
\begin{align*}
  |a(u,v)&-a(\u,v)|\\
&\le
  C_I\left(\sum_{j=1}^N\alpha_j\NN{\P f+\e\u''-d\u}^2_{L^2(K_j)}
+\alpha_j\NN{f-\P f}_{L^2(K_j)}^2\right)^{\nicefrac12}\left(2\sum_{j=1}^N\NNN{v}^2_{\widetilde K_j}\right)^{\nicefrac12}\\
  &\quad+C_I\left(\sum_{j=1}^{N-1}\e^2\gamma_j\N{\jmp{\u'}(x_j)}^2\right)^{\nicefrac12}\left(\sum_{j=1}^{N-1}\left(\NNN{v}_{\widetilde K_{j}}^2+\NNN{v}^2_{\widetilde K_{j+1}}\right)\right)^{\nicefrac12}
\end{align*}
Observing that
\[
\sum_{j=1}^N\NNN{v}^2_{\widetilde K_j}\le 3\NNN{v}^2,\qquad
\sum_{j=1}^{N-1}\left(\NNN{v}_{\widetilde K_{j}}^2+\NNN{v}^2_{\widetilde K_{j+1}}\right)\le 6\NNN{v}^2,
\]
we finally see that
\begin{align*}
|a(u,v)&-a(\u,v)|\
\le \sqrt{12}C_I\left(\sum_{j=1}^N\eta_{K_j}^2\right)^{\nicefrac12}\NNN{v},
\end{align*}
with~$\eta_{K_j}$ from~\eqref{eq:eta}. Dividing both sides of this
inequality by~$\NNN{v}$ and taking the supremum for all~$v\in
H^1_0(\Omega)$ shows Theorem~\ref{thm:main}.

\begin{remark} In the case~$d > 0$, following along the lines of~\cite{Ve1998} and \cite{MelenkAPOST01}, and employing $p$-dependent norm equivalence estimates in order to be able to involve suitable cut-off functions locally, it is possible to prove $\e$-robust local lower bounds for the error in terms of the error indicators~$\eta_{K_j}$ and some data oscillation terms. Specifically, if $d$ satisfies 
$0 < d_0 \leq \inf_{x \in \Omega} d(x) \leq \sup_{x \in \Omega} d(x) \leq d_1 < \infty$ and $\beta \in (\nicefrac12,1]$
is fixed, then, the lower bounds 
\begin{align*}
\alpha_{j} \|f - (-\e \u^{\prime\prime} + d \u)\|^2_{L^2(K_j)}
& \leq C \left[ p_j^2 \NNN{u - \u}^2_{K_j}  + \alpha_j R_{K_j}^2\right],\quad 1\le j\le N,
\end{align*}
and
\begin{align*}
\gamma_j \e^2 |\jmp{\u}(x_j)|^2 &\leq C \left[ p_{j}^2 \NNN{u - \u}^2_{K_j \cup K_{j+1}} + \alpha_j R_{K_j}^2 + 
\alpha_{i+1} R_{K_{j+1}}^2\right],\quad 1\le j\le N-1,
\end{align*}
can be proved.
Here, for any element~$K_j\in\T$, $1\le j\le N$, the data oscillation term $R_{K_j}$ is defined by 
\begin{align*}
R_{K_j} &= 
{p_j^\beta}
\left[ 
\left\|\Phi_{K_j}^{\nicefrac{\beta}{2}} (f - \Pi_{K_j} f)\right\|_{L^2(K_j)} + 
\left\|\Phi_{K_j}^{\nicefrac{\beta}{2}} (d \u - \Pi_{K_j} (d \u))\right\|_{L^2(K_j)} 
\right] \\
&\quad+ 
\|f - \Pi_{K_j} f\|_{L^2(K_j)} + 
\|d \u - \Pi_{K_j} (d \u)\|_{L^2(K_j)}.  
\end{align*}
The constant $C > 0$ depends only on the ratio $\nicefrac{d_1}{d_0}$, the choice of $\beta \in (\nicefrac12,1]$, and the shape-regularity
parameter $\mu$ from~\eqref{eq:gamma-shape-regular}; see Appendix~\ref{sc:AB} (in particular, Theorem~\ref{thm:efficiency-appendix}) for details. It is worth stressing that the $L^2$-projector $\Pi_{K_j}:L^2(K_j) \rightarrow \mathbb{P}_{p_j}(K_j)$ can be replaced with a projection onto a space of polynomials of degree $\lambda p_j$ for a fixed $\lambda > 0$. While the constant $C$ then additionally depends on $\lambda$, this allows to exploit smoothness of the coefficient function $d$  in the treatment of the second term in~$R_{K_j}$.
\end{remark}


\section{Numerical Experiments}\label{sc:numerics}

The purpose of this section is to illustrate the \emph{a posteriori} error estimates from Theorem~\ref{thm:main} in the context of some specific numerical experiments. We will emphasize on the robustness of the error indicators with respect to~$\e$ as~$\e\to 0$, and on the capability of $hp$-FEM to deliver exponential rates of convergence. 

\subsection{$hp$-Adaptive Procedure}
We shall apply an $hp$-adaptive algorithm which is based on the following ingredients:
\begin{enumerate}[(a)]
\item \emph{Element marking:} The elementwise error indicators~$\eta_{K_j}$ from~Theorem~\ref{thm:main} are employed in order to mark elements for refinement. More precisely, we fix a parameter~$\theta\in(0,1)$ (in the experiments below we choose~$\theta=0.5)$ and select elements to be refined according to the \emph{D\"orfler marking} criterion:
\begin{equation}\label{eq:dorf}\tag{D}
\theta\sum_{j=1}^N\eta_{K_j}^2\le \sum_{j'=1}^M\eta_{K_{j'}}^2.
\end{equation}
Here, the indices~$j'$ are chosen such that the error indicators~$\eta_{K_{j'}}$ from~\eqref{eq:eta} are sorted in descending order, and~$M$ is minimal.
\item \emph{$hp$-refinement criterion:} The decision of whether a marked element in step~(a) is refined with respect to~$h$ (element bisection) or~$p$ (increasing the local polynomial order by~1) is based on a smoothness testing approach. Specifically, if the (numerical) solution is considered smooth on a marked element~$K_j$, then the polynomial degree is increased by~1 on that particular element (no element bisection), otherwise the element is bisected (retaining the current polynomial degree~$p_j$ on both subelements). In order to evaluate the smoothness of the solution~$\u$ on a marked element~$K_j$, we employ an elementwise smoothness indicator as introduced in~\cite[Eq.~(3)]{FaWiWi14}:
\begin{equation}\label{eq:hp}\tag{F}
\mathcal{F}^{p_j}_j[\u]:=\begin{cases}\displaystyle
\frac{\NN{\frac{\dd^{p_j-1}}{\dd x^{p_j-1}}\u}_{L^\infty(K_j)}}{h_j^{-\nicefrac12}\NN{\frac{\dd^{p_j-1} \u}{\dd x^{p_j-1}}}_{L^2(K_j)}+\frac{1}{\sqrt2}h_j^{\nicefrac12}\NN{\frac{\dd^{p_j} \u}{\dd x^{p_j}}}_{L^2(K_j)}}  & \text{if }\frac{\dd^{p_j-1}}{\dd x^{p-1}}\u|_{K_j}\not\equiv 0,\\[3ex]
1 & \text{if }\frac{\dd^{p_j-1}}{\dd x^{p_j-1}}\u|_{K_j}\equiv 0.
\end{cases}
\end{equation}
Here, the basic idea is to consider the continuous Sobolev embedding~$H^1(K_j)\hookrightarrow L^\infty(K_j)$, which implies that
\[
\sup_{v\in H^1(K_j)}\frac{\NN{v}_{L^\infty(K_j)}}{h_j^{-\nicefrac12}\NN{v}_{L^2(K_j)}+\frac{1}{\sqrt2}h_j^{\nicefrac12}\NN{v'}_{L^2(K_j)}}\le 1;
\]
see~\cite[Proposition~1]{FaWiWi14}. In particular, it follows that~$\mathcal{F}_j^{p_j}[\u]\le 1$. For ease of evaluation, note that, by taking the derivative of order~$p_j-1$ in the definition~\eqref{eq:hp}, the smoothness indicator~$\mathcal{F}^{p_j}_j[\u]$ is evaluated for linear functions only; in this case, it can be shown that
\[
\frac12\approx\frac{\sqrt{3}}{\sqrt{6}+1}\le\mathcal{F}_j^{p_j}[\u]\le 1;
\] 
cf.~\cite[Section~2.2]{FaWiWi14}. The numerical solution~$\u$ is classified smooth on~$K_j$ if~$\mathcal{F}^{p_j}_j[\u]\ge\tau$ and otherwise nonsmooth, for a prescribed smoothness testing parameter~$\tau\in(\nicefrac{\sqrt{3}}{(\sqrt{6}+1)},1)$ (in our experiments we choose~$\tau=0.6$). Incidentally, representing the local solution~$\u|_{K_j}$ in terms of (local) Legendre polynomials (or more general Jacobi polynomials), any derivatives of~$\u$ can be evaluated exactly by means of appropriate recurrence relations. We refer to the papers~\cite{FaWiWi14} (see also~\cite{Wi11_2,Wi11}) for more details on this smoothness testing strategy.
\end{enumerate}

Combing the above ideas leads to the following $hp$-adaptive refinement algorithm:

\begin{algorithm}\label{al:hp}
Choose prescribed parameters~$\theta\in(0,1)$ and~$\tau\in \left(\frac{\sqrt{3}}{\sqrt{6}+1},1\right)$ for the D\"orfler marking as well as for the $hp$-decision process as described before, respectively.
Furthermore, consider a (coarse) initial mesh~$\T^0$, and an associated polynomial degree vector~$\bm p^0$. Set~$n=0$. Then, perform the following iteration (until a given maximum iteration number is reached, or until the estimated error is sufficiently small):
\begin{enumerate}[(1)]
\item Compute the numerical solution~$\u^n\in\Vn$ from~\eqref{eq:hpFEM}, and evaluate the error indicators~$\{\eta_{K_j}\}_{K_j\in\T^n}$ defined in~\eqref{eq:eta}.
\item Mark the elements in~$\mathcal{T}^n$ based on the D\"orfler marking~\eqref{eq:dorf}.
\item Create the mesh $\mathcal{T}^{n+1}$ with corresponding polynomial degree distribution $\bm p^{n+1}$: 
For each marked element~$K_j \in \mathcal{T}^n$ evaluate the smoothness indicator~$\mathcal{F}^{p_j}_j[\u]$ from~\eqref{eq:hp}; if there holds~$\mathcal{F}^{p_j}_j[\u]\ge\tau$ then increase the polynomial degree~$p_j^n$ by 1, i.e., $p_j^n\leftarrow p_j^n+1$, otherwise bisect~$K_j$ into two new elements (taking~$p_j^n$ for both elements). Increase $n$ by $1$, i.e., $n\leftarrow n+1$.
\end{enumerate}
\end{algorithm}

In the ensuing experiments, we will start Algorithm~\ref{al:hp} based on a uniform initial mesh consisting of 10 elements, and a polynomial degree distribution~$\bm p^0=(1,\ldots 1)$.

\subsection{Example 1:} We begin by looking at the singularly perturbed reaction-diffusion problem
\[
-\e u''+u=1\quad\text{on }\Omega=(-1,1),\qquad u(-1)=u(1)=0.
\]
This problem is coercive and has exactly one (analytic) solution. For small~$\e\ll 1$ the exact solution exhibits a boundary layer at~$x=0$ and~$x=1$ which needs to be resolved properly by the $hp$-adaptive FEM. In Figure~\ref{fig:Ex1a} the $hp$-mesh after 24 adaptive refinement steps is displayed for~$\e=10^{-4}$. We observe that the boundary layer is resolved by some mild $h$-refinement and by increasing~$p$ in the same area. Moreover, the mesh remains unrefined in the center of the domain where the exact solution is nearly constant~$1$. In addition, in Figures~\ref{fig:Ex1b_1} and~\ref{fig:Ex1b_2} we show the errors measured with respect to the norm~$\NNN{\cdot}$ from~\eqref{eq:norm} as well as the estimated errors. The exponential decay of both quantities for different choices of~$\e$ becomes clearly visible in the semi-logarithmic plot. Finally, the efficiency indices, i.e., the ratio between the estimated and true errors, are depicted in Figure~\ref{fig:Ex1c}; they oscillate between~$1$ and~$4$, and do not deteriorate as~$\e\to 0$, thereby clearly testifying to the robustness of the \emph{a posteriori} error estimate from Theorem~\ref{thm:main}. 

\begin{figure}
\begin{center}
	\includegraphics[width=0.8\linewidth]{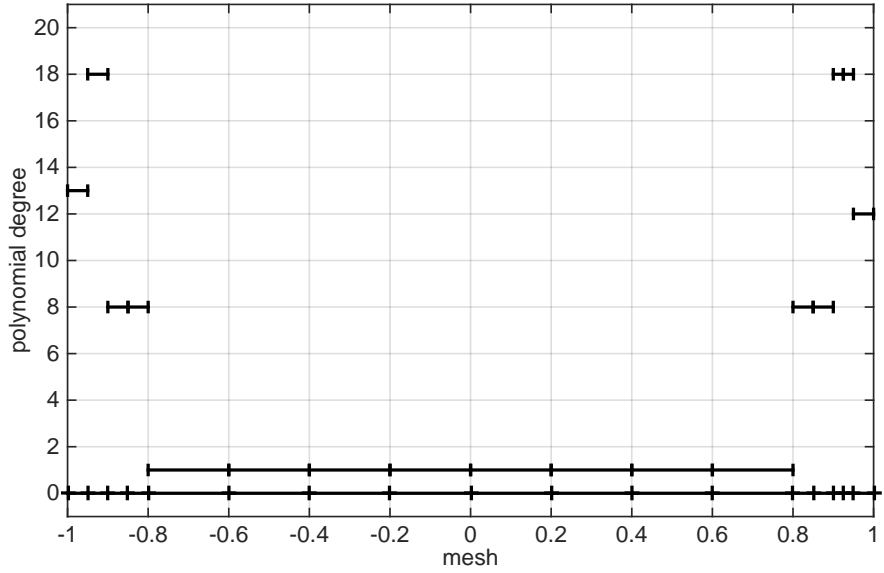}
\end{center}
\caption{Example 1 for~$\e=10^{-4}$: Adaptively generated $hp$-mesh after 24 refinement steps (17 elements, maximal polynomial degree 18).}
\label{fig:Ex1a}
\end{figure}

\begin{figure}
\begin{center}
	\includegraphics[width=0.8\linewidth]{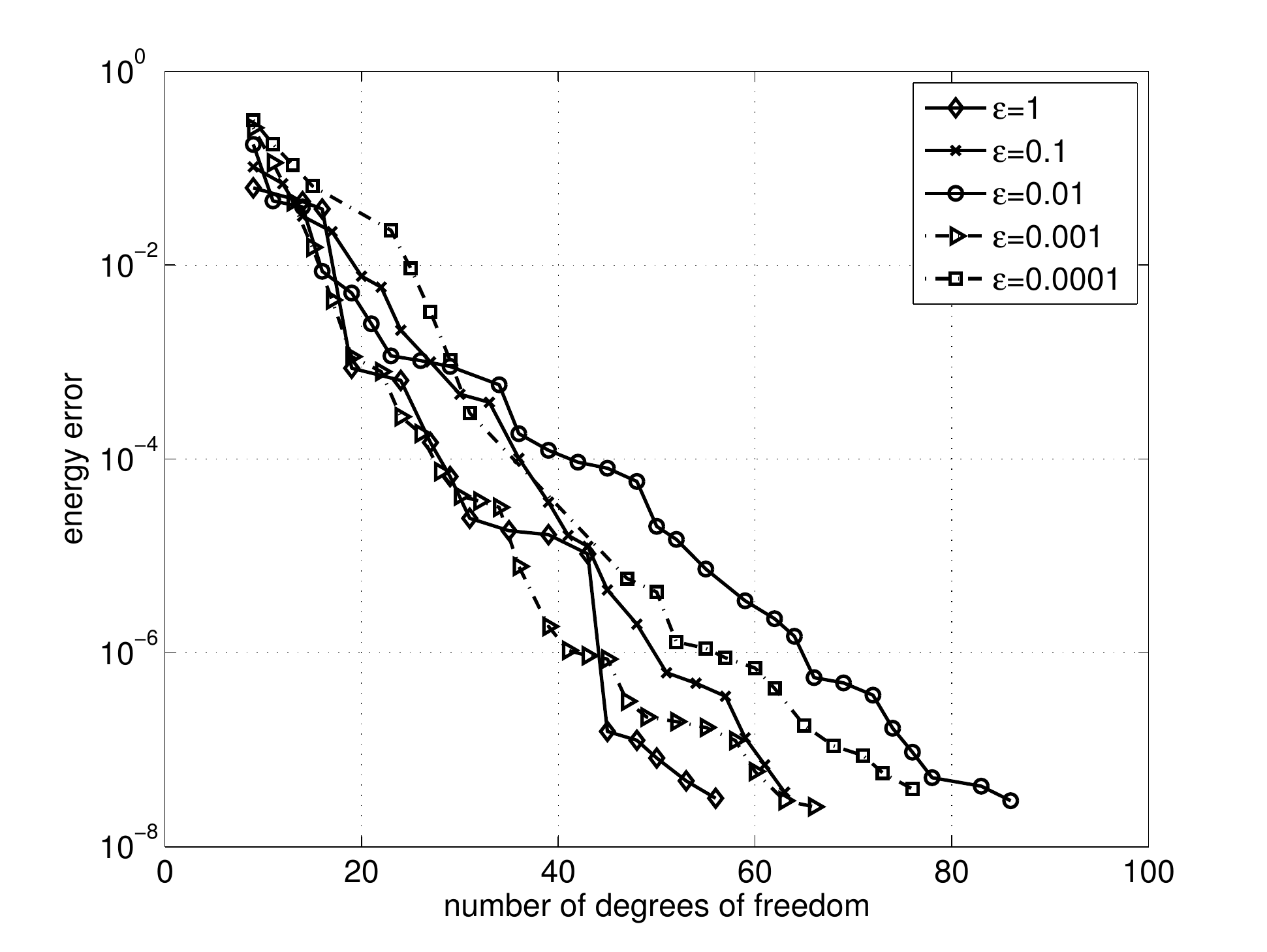}\\
\end{center}
\caption{Example 1: Energy error for different choices of~$\e$.}
\label{fig:Ex1b_1}
\end{figure}

\begin{figure}
\begin{center}
	\includegraphics[width=0.8\linewidth]{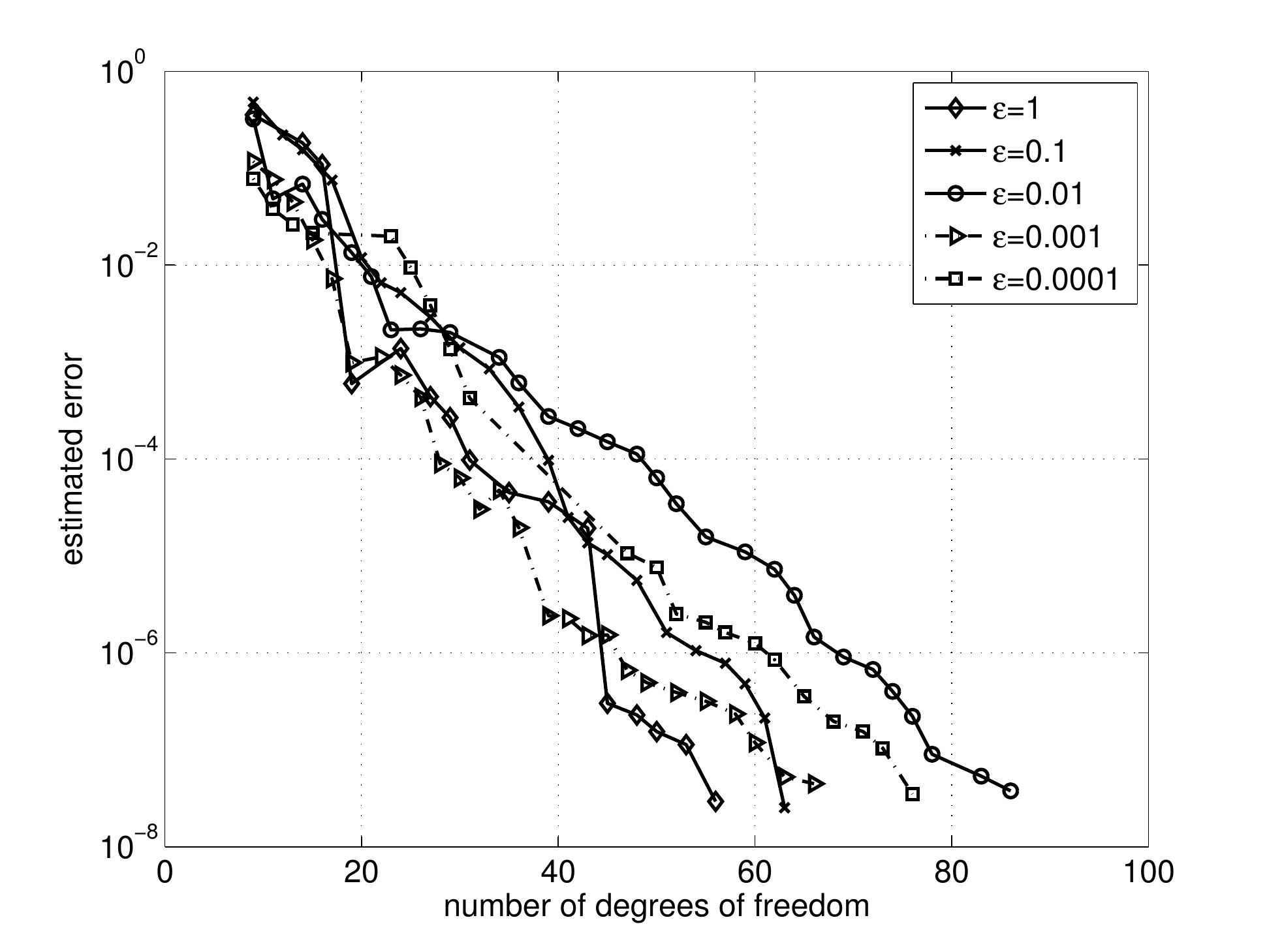}
\end{center}
\caption{Example 1: Estimated error for different choices of~$\e$.}
\label{fig:Ex1b_2}
\end{figure}

\begin{figure}
\begin{center}
	\includegraphics[width=0.8\linewidth]{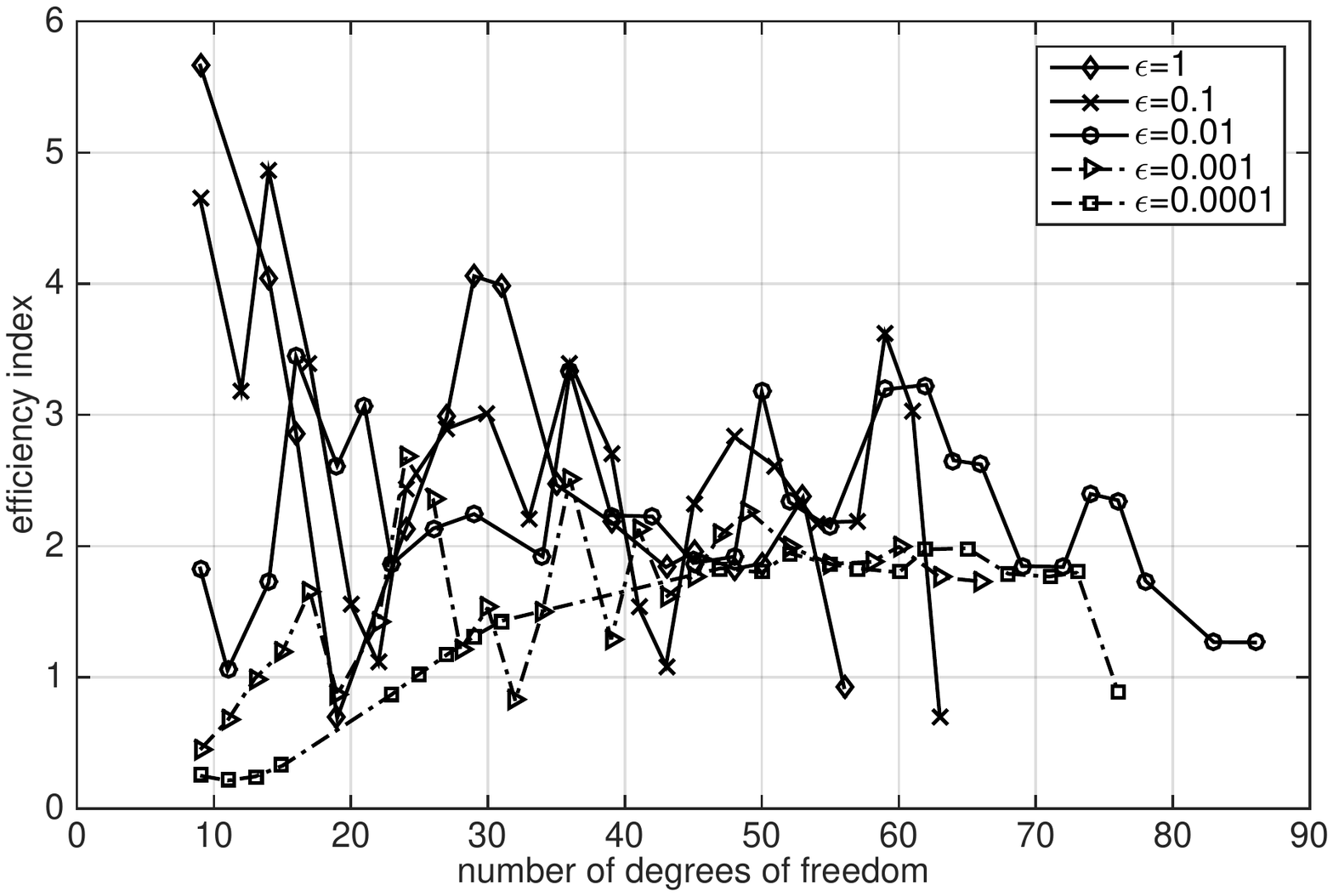}\hfill
\end{center}
\caption{Example 1: Efficiency indices for different choices of~$\e$.}
\label{fig:Ex1c}
\end{figure}

\subsection{Example 2:} In this experiment, we consider Airy's equation
\[
-\e u''+xu=1\quad\text{on }\Omega=(-1,1),\qquad u(-1)=u(1)=0.
\]
The particularity of this example is that, for~$0<\e\ll 1$, the corresponding differential operator is coercive for~$x\ge 1$, however, it becomes hyperbolic near~$x=-1$; this becomes evident in Figure~\ref{fig:Ex2a_1}, where the numerical solution is shown for~$\e=10^{-4}$. The oscillating regime for~$x<0$ requires a proper resolution by the $hp$-FEM as shown in the $hp$-mesh in Figure~\ref{fig:Ex2c}. The decay of the estimated error is plotted in Figure~\ref{fig:Ex2a_2} for various choices of~$\e$. In particular, for small~$\e$, we see that, after a number of initial refinements resolving the oscillations, the algorithm provides exponentially converging results.

\begin{figure}
\begin{center}
	\includegraphics[width=0.8\linewidth]{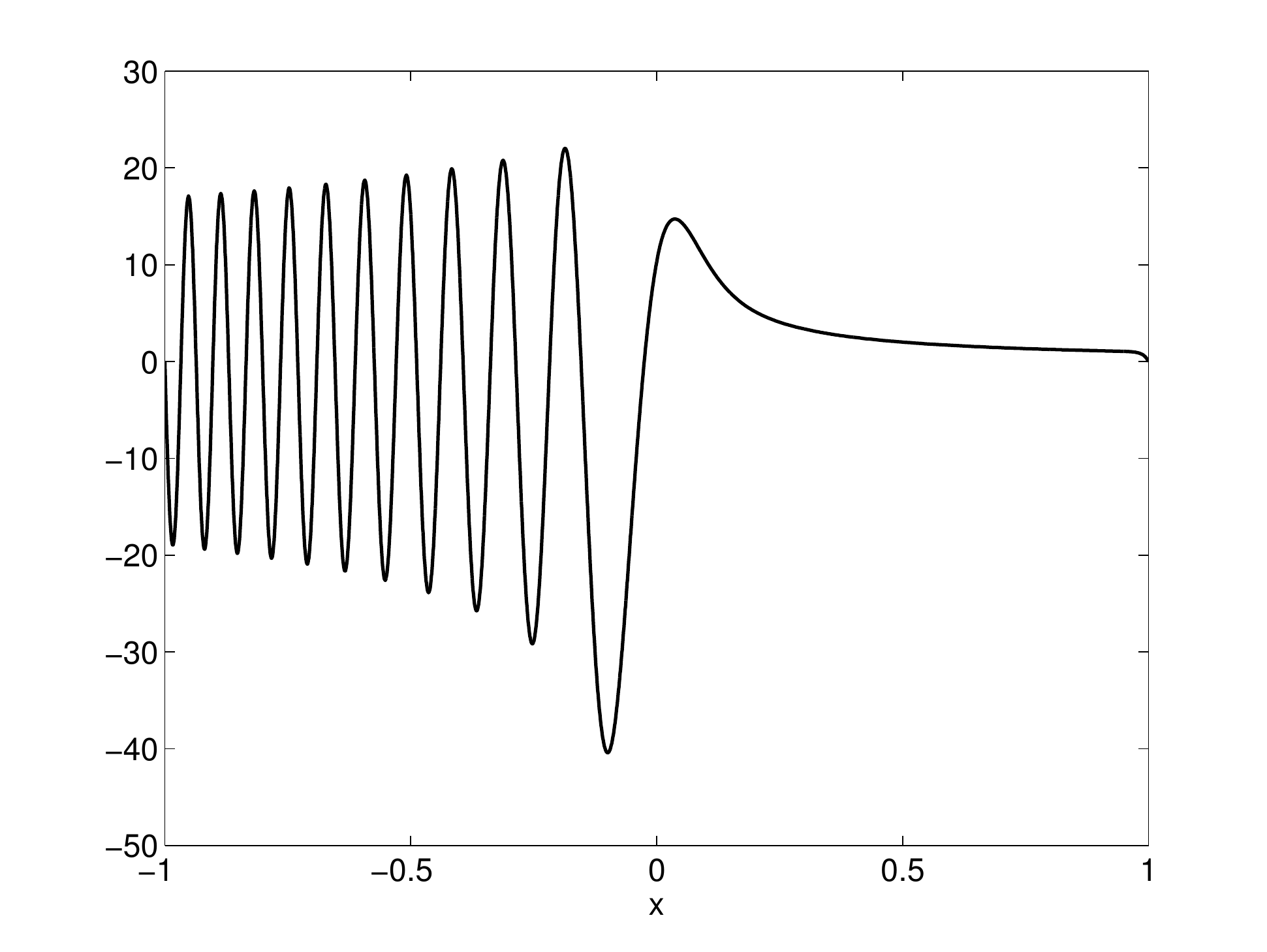}\\
\end{center}
\caption{Example 2 for~$\e=10^{-4}$: Numerical solution with strong oscillations for $x < 0$.}
\label{fig:Ex2a_1}
\end{figure}

\begin{figure}
\begin{center}
	\includegraphics[width=0.8\linewidth]{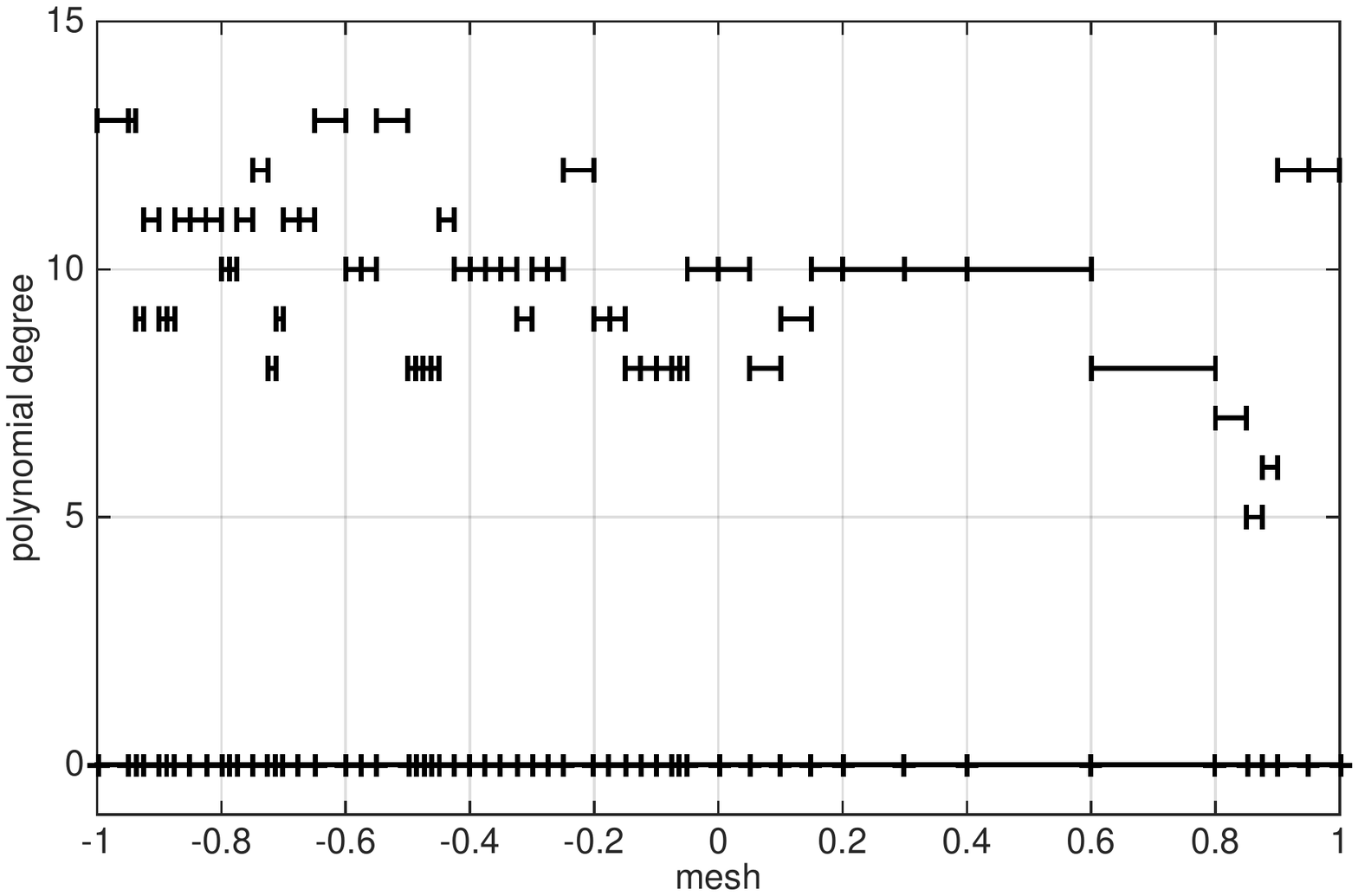}
\end{center}
\caption{Example 2 for~$\e=10^{-4}$: Adaptively generated $hp$-mesh after 75 refinement steps (55 elements, maximal polynomial degree 13).}
\label{fig:Ex2c}
\end{figure}

\begin{figure}
\begin{center}
	\includegraphics[width=0.8\linewidth]{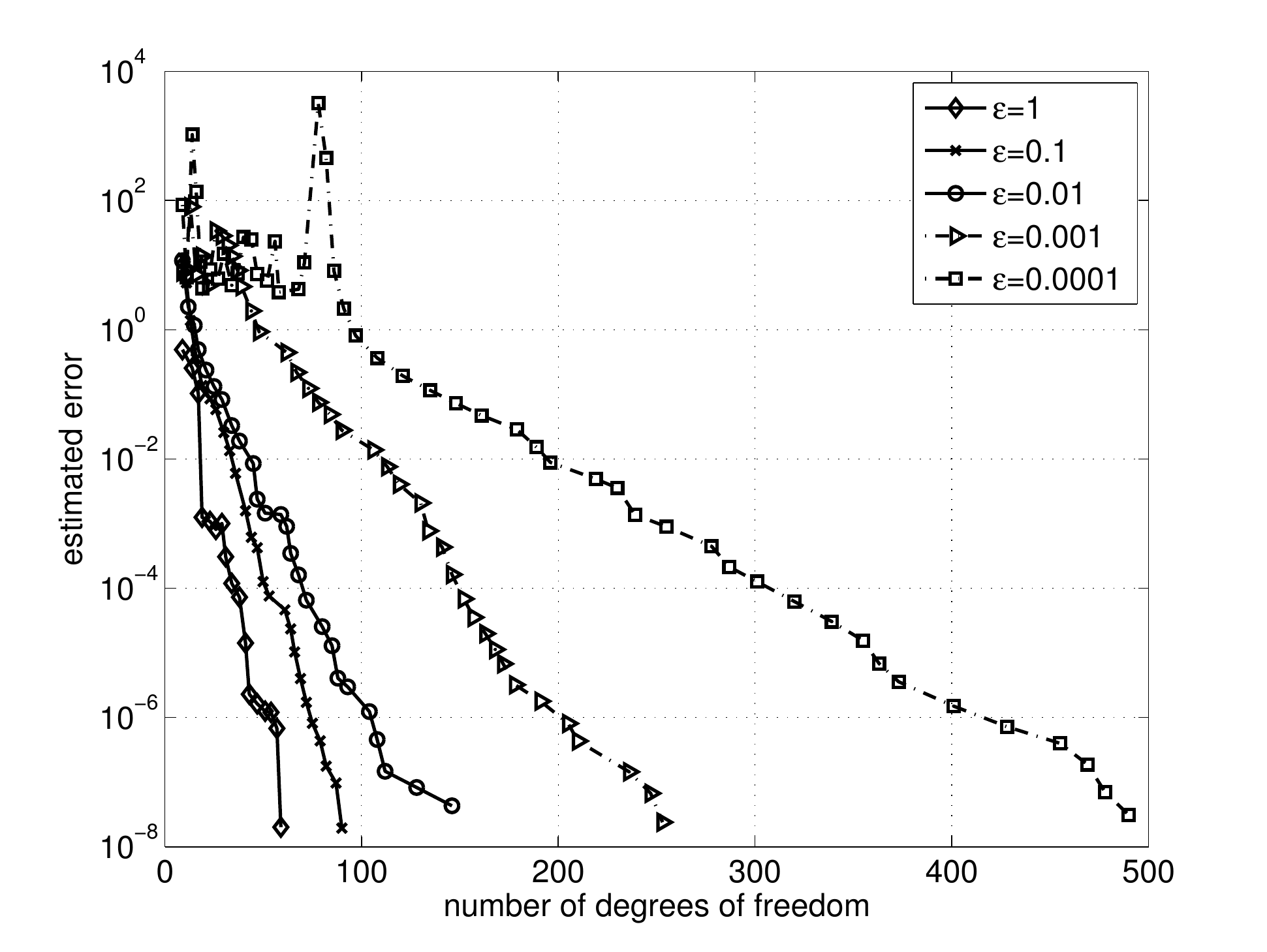}
\end{center}
\caption{Example 2: Estimated errors for different choices of~$\e$.}
\label{fig:Ex2a_2}
\end{figure}


\section{Conclusions}\label{sc:concl}

In this paper we have studied the numerical approximation of linear second-order boundary value problems (with possibly non-constant reaction coefficient) by the $hp$-FEM. In particular, we have derived an \emph{a posteriori} error estimate for a natural residual-type norm that is robust with respect to the (possibly) small perturbation parameter and explicit with respect to the local mesh size and polynomial degree. Numerical experiments for both coercive as well as partly coercive differential equations underline the robustness of the error bound. In addition, an appropriate combination of the error estimate with a smoothness testing procedure reveals that the method is able to achieve exponential rates of convergence.


\appendix

\clearpage

\section{Multiplicative Trace Inequality}\label{sc:AA}

\begin{lemma}\label{pr:trace}
  Let~$h>0$ and~$w\in H^1(0,h)$. Then, the multiplicative trace
  inequality
\[
\max\left\{|w(0)|, |w(h)|\right\}^2\le
h^{-1}\|w\|^2_{L^2(0,h)}+2\|w\|_{L^2(0,h)}\|w'\|_{L^2(0,h)}
\]
holds true.
\end{lemma}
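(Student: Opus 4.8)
The plan is to prove the bound for $|w(0)|^2$; the estimate for $|w(h)|^2$ follows from the symmetric identity $w(h)^2 = w(x)^2 + 2\int_x^h w(t)\,w'(t)\,dt$, and taking the maximum of the two gives the assertion. Since we are in one space dimension, every $w\in H^1(0,h)$ is absolutely continuous on $[0,h]$, so pointwise values are well defined and the fundamental theorem of calculus may be applied to $w^2$ without any regularization.

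The key device is an averaging trick that simultaneously produces the correct $h^{-1}$ scaling on the $L^2$-term and the multiplicative product structure on the remainder. For any fixed $x\in(0,h)$, the fundamental theorem of calculus gives
\[
w(0)^2 = w(x)^2 - 2\int_0^x w(t)\,w'(t)\,dt.
\]
Averaging this identity over $x\in(0,h)$, that is, integrating in $x$ and dividing by $h$, turns the first term on the right into $h^{-1}\|w\|_{L^2(0,h)}^2$, which is precisely the first term in the claimed bound.

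It remains to control the averaged double integral. First I would bound the inner integral crudely by enlarging the domain of integration, $\bigl|\int_0^x w\,w'\,dt\bigr|\le\int_0^h |w|\,|w'|\,dt$, which is now independent of $x$; the outer average over $x\in(0,h)$ therefore contributes only a factor $h/h=1$. A single application of the Cauchy--Schwarz inequality, $\int_0^h |w|\,|w'|\,dt\le\|w\|_{L^2(0,h)}\|w'\|_{L^2(0,h)}$, then yields the second term $2\|w\|_{L^2(0,h)}\|w'\|_{L^2(0,h)}$. Adding the two contributions gives the desired bound for $w(0)^2$.

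There is no genuine obstacle here; the proof is a short computation. The only point requiring a little care is the order of the two reductions in the remainder term: one must bound the inner integral (thereby removing its $x$-dependence) \emph{before} averaging, so that the $1/h$ prefactor is absorbed by the length of the outer interval rather than surviving to corrupt the scaling. The manipulations for merely $H^1$ (rather than smooth) functions $w$ are justified either by the one-dimensional Sobolev embedding into absolutely continuous functions or, alternatively, by a routine density argument.
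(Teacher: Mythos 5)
Your proof is correct and is essentially the paper's argument in disguise: averaging the identity $w(0)^2=w(x)^2-2\int_0^x ww'\,\dd t$ over $x\in(0,h)$ and applying Fubini yields exactly the paper's identity $w(0)^2=h^{-1}\int_0^h w^2\dx+2\int_0^h(h^{-1}x-1)ww'\dx$, obtained there by integrating the total derivative of $(h^{-1}x-1)w(x)^2$, and both proofs then finish with Cauchy--Schwarz (you drop the weight by enlarging the inner integration domain, the paper by bounding $|1-h^{-1}x|<1$). The only cosmetic difference is the treatment of regularity: the paper uses density of smooth functions, while you invoke absolute continuity of $H^1$ functions in one dimension; both are valid.
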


\begin{proof}
  By density of~$C^\infty([0,h])$ in~$H^1(0,h)$, we may suppose
  that~$w$ is smooth. There holds
\[
w(0)^2=\int_0^h\frac{\mathsf{d}}{\dx}\left[\left(h^{-1}x-1\right)w(x)^2\right]\dx
=h^{-1}\int_0^h
w(x)^2\dx+2\int_0^h\left(h^{-1}x-1\right)w(x)w'(x)\dx.
\]
Then, applying the Cauchy-Schwarz inequality and noticing
that~$\left|1-h^{-1}x\right|<1$ for~$x\in(0,h)$, results in
\[
|w(0)|^2\le h^{-1}\|w\|^2_{L^2(0,h)}+2\|w\|_{L^2(0,h)}\|w'\|_{L^2(0,h)}.
\]
By symmetry, the same bound can be obtained for~$|w(h)|^2$. This completes the
proof.
\end{proof}

\section{Details on the efficiency bound}\label{sc:AB}

We follow \cite{MelenkAPOST01}, taking care of the presence of the singular perturbation
parameter $\e$ as well as the fact that the coefficient $d$ is possibly variable. 
For an element $K_i=(x_{i-1},x_{i})$, $1\le i\le N$, let $\Phi_{K_i}$ be the scaled distance function from $\partial K_i=\{x_{i-1},x_i\}$, i.e., 
\[
\Phi_{K_i}(x) = h_i^{-1} \min(|x-x_{i-1}|,|x-x_i|),\qquad x\in K_i. 
\]

\begin{lemma}
\label{lemma:MW-lemma-3.3}
Let~$\u$ be the $hp$-FEM solution of~\eqref{eq:hpFEM}, and~$K_i\in\T$, $1\le i\le N$. Then, for any $\beta \in (-\nicefrac12,1]$ there exists a constant $C_\beta > 0$ such that 
\begin{equation}\label{eq:AB1}
\begin{split}
\|f - (-\e \u^{\prime\prime} +& d \u)\|_{L^2(K_i)} \\
\leq C_\beta \Bigg\{&
\left( 
\sqrt{\e} \frac{p_i^2}{h_i} + {p_i^\beta} \left\|\sqrt{|d|}\right\|_{L^\infty(K_i)}
\right) \NNN{u - \u}_{K_i} \\
&  + 
{p_i^\beta}
\left[ 
\left\|\Phi_{K_i}^{\nicefrac{\beta}{2}} (f - \Pi_{K_i} f)\right\|_{L^2(K_i)} + 
\left\|\Phi_{K_i}^{\nicefrac{\beta}{2}} (d \u - \Pi_{K_i} (d \u))\right\|_{L^2(K_i)} 
\right] \\
&
+ 
\|f - \Pi_{K_i} f\|_{L^2(K_i)} + 
\|d \u - \Pi_{K_i} (d \u)\|_{L^2(K_i)} 
\Bigg\}.
\end{split}
\end{equation}
\end{lemma}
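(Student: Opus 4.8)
The plan is to prove this \emph{efficiency} (local lower) bound by the weighted residual / bubble-function technique of \cite{Ve1998,MelenkAPOST01}, made $p$-explicit and tracked carefully in the perturbation parameter~$\e$. Write the elementwise residual as $r:=f-(-\e\u''+d\u)=f+\e\u''-d\u$ on $K_i$, and let $v:=\Pi_{K_i}r\in\mathbb{P}_{p_i}(K_i)$ be its $L^2(K_i)$-projection. Since $\u''|_{K_i}\in\mathbb{P}_{p_i-2}(K_i)$, we have $\Pi_{K_i}(\e\u'')=\e\u''$, whence $r-v=(f-\Pi_{K_i}f)-(d\u-\Pi_{K_i}(d\u))$. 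The triangle inequality $\NN{r}_{L^2(K_i)}\le\NN{v}_{L^2(K_i)}+\NN{r-v}_{L^2(K_i)}$ then immediately splits off the two \emph{unweighted} oscillation terms $\NN{f-\Pi_{K_i}f}_{L^2(K_i)}+\NN{d\u-\Pi_{K_i}(d\u)}_{L^2(K_i)}$, and the task reduces to bounding the polynomial part $\NN{v}_{L^2(K_i)}$.

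To bound $\NN{v}_{L^2(K_i)}$, I would test the residual equation with the weighted bubble $w:=\Phi_{K_i}^\beta v$, extended by zero to all of $\Omega$. Since $w$ vanishes at the nodes $x_{i-1},x_i$, no interelement jump contributions appear, and elementwise integration by parts yields the identity $\int_{K_i}r\,w\dx=a(u-\u,w)=\e\int_{K_i}(u-\u)'w'\dx+\int_{K_i}d(u-\u)w\dx$. On the other hand, inserting $r=v+(r-v)$ gives $\int_{K_i}r\,w\dx=\NN{\Phi_{K_i}^{\beta/2}v}_{L^2(K_i)}^2+\int_{K_i}\Phi_{K_i}^\beta v\,(r-v)\dx$. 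Equating the two expressions and estimating the last integral by the weighted Cauchy--Schwarz inequality reproduces exactly the two \emph{weighted} oscillation terms $\NN{\Phi_{K_i}^{\beta/2}(f-\Pi_{K_i}f)}_{L^2(K_i)}+\NN{\Phi_{K_i}^{\beta/2}(d\u-\Pi_{K_i}(d\u))}_{L^2(K_i)}$, multiplied by $\NN{\Phi_{K_i}^{\beta/2}v}_{L^2(K_i)}$.

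It then remains to control $|a(u-\u,w)|$ by the energy norm in an $\e$-robust way. Here the decisive bookkeeping is to keep the factor $\sqrt\e$ attached to the gradient contribution: using $\e\NN{(u-\u)'}_{L^2(K_i)}\le\sqrt\e\,\NNN{u-\u}_{K_i}$ one gets $\e\int_{K_i}(u-\u)'w'\le\sqrt\e\,\NNN{u-\u}_{K_i}\NN{w'}_{L^2(K_i)}$, while $\NN{\sqrt{|d|}(u-\u)}_{L^2(K_i)}\le\NNN{u-\u}_{K_i}$ gives $\int_{K_i}d(u-\u)w\le\NN{\sqrt{|d|}}_{L^\infty(K_i)}\NNN{u-\u}_{K_i}\NN{w}_{L^2(K_i)}$, with no $\e^{-1}$ ever appearing. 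The argument is then closed by the $p$-explicit weighted polynomial inequalities of \cite{MelenkAPOST01}: a weighted norm equivalence of the form $\NN{v}_{L^2(K_i)}\le C_\beta\,p_i^{\beta}\NN{\Phi_{K_i}^{\beta/2}v}_{L^2(K_i)}$, the bound $\NN{w}_{L^2(K_i)}\le C\NN{\Phi_{K_i}^{\beta/2}v}_{L^2(K_i)}$ (since $\Phi_{K_i}\le\nicefrac12$), and a weighted inverse estimate for $\NN{(\Phi_{K_i}^\beta v)'}_{L^2(K_i)}$. Combining these, writing $A:=\NN{\Phi_{K_i}^{\beta/2}v}_{L^2(K_i)}$ and dividing the resulting inequality $A^2\le(\cdots)A$ by $A$, and finally converting back via the norm equivalence, produces exactly the coefficients $\sqrt\e\,p_i^2h_i^{-1}$ and $p_i^\beta\NN{\sqrt{|d|}}_{L^\infty(K_i)}$ in front of $\NNN{u-\u}_{K_i}$ together with the $p_i^\beta$-weighted oscillation.

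The step I expect to be genuinely delicate — and which I would invoke from \cite{MelenkAPOST01} rather than reprove — is this package of $p$-explicit weighted polynomial estimates, above all the sharp weighted inverse inequality for $(\Phi_{K_i}^\beta v)'$ and the admissibility of $w=\Phi_{K_i}^\beta v$ as an $H^1_0$-test function; these are precisely the places where the admissible range of $\beta$ and the exact powers $p_i^2$, $p_i^\beta$ originate, and where the naive ``$\Phi_{K_i}^\beta v$ is essentially a degree-$p_i$ polynomial'' heuristic must be replaced by estimates that exploit how the weight suppresses the function near $\partial K_i$. The remaining work is the (routine but careful) exponent tracking and the consistent use of $\e\NN{(u-\u)'}\le\sqrt\e\,\NNN{u-\u}$ that secures robustness as $\e\to0$.
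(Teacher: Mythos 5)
Your proposal is correct and follows essentially the same route as the paper's proof: your $v=\Pi_{K_i}r$ coincides with the paper's polynomial $\Pi_{K_i}f-(-\e\u''+\Pi_{K_i}(d\u))$ (since $\e\u''$ is already a polynomial), your test function $\Phi_{K_i}^\beta v$ is exactly the paper's $v_{K_i}$, and the identity $\int_{K_i} r\,w\dx=a(u-\u,w)$ together with the $\sqrt\e$-bookkeeping and the weighted inverse/norm-equivalence estimates of \cite[Lemma~2.4]{MelenkAPOST01} reproduces the paper's argument term by term, including the restriction $\beta>\nicefrac12$ needed for the admissibility of the weighted test function. The only (cosmetic) difference is that you apply the triangle inequality splitting off the unweighted oscillation at the start, whereas the paper does it at the end.
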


\begin{remark}~
\label{rem:efficiency-volume-term}
\begin{enumerate}[(i)]
\item 
As written in Lemma~\ref{lemma:MW-lemma-3.3}, $\Pi_{K_i}$ signifies the $L^2(K_i)$-projection. This is not essential and could be replaced with other approximation operators. It is also not necessary that $\Pi_{K_i}$ maps into the space of polynomials of degree $p_i$---it could as well be the space of degree~$2 p_{i}$. 
\item 
The right-hand side of~\eqref{eq:AB1} involves  the $hp$-FEM solution~$\u$ from~\eqref{eq:hpFEM}. If $\Pi_{K_i}$ maps into the space of polynomials of degree $2 p_{i}$, then the term $d \u - \Pi_{i} (d \u)$ can be controlled provided some \emph{a priori} control of $ \|\u\|_{L^2(\Omega)}$ or at least of $\|\sqrt{|d|} \u \|_{L^2(\Omega)}$ is available. 
\item For those elements where $d$ is bounded away from $0$, Lemma~\ref{lemma:MW-lemma-3.3} provides indeed a lower bound since then 
$$
\sqrt{\alpha_i} \sim 
\min\left \{\frac{h_i}{\sqrt{\e} p_i}, \left\| \frac{1}{\sqrt{|d|}}\right\|_{L^\infty(K_i)}\right\}
$$
is of order~$\mathcal{O}(1)$. In fact, if $\inf_{x \in K_i} |d(x)|$ and $\sup_{x \in K_i} |d(x)|$ are of comparable magnitude, then 
\begin{equation}
\label{eq:foo-10}
\sqrt{\alpha_i} \left( \sqrt{\e} \frac{p_i^2}{h_i} + p_i^\beta \left\|\sqrt{|d|}\right\|_{L^\infty(K_i)} \right) 
\leq C p_i,
\end{equation}
where the constant $C>0$ depends only on the ratio 
\begin{equation}
\label{eq:ratio}
\frac{\sup_{x \in K_i} |d(x)|}{\inf_{x \in K_i} |d(x)|}.
\end{equation}
\item
If the  ratio \eqref{eq:ratio} cannot be controlled well (e.g., if $|d|$ becomes arbitrarily small or even zero on~$K_i$), then the efficiency bound breaks down unless the element is sufficiently small (relative to $\e$). 
\end{enumerate}
\end{remark}

\begin{proof}[Proof of Lemma~\ref{lemma:MW-lemma-3.3}]
Let $\beta \in (0,1]$. On~$K_i$ define
\[
v_{K_i}:= \Phi_{K_i}^\beta \cdot (\Pi_{K_i} (f|_{K_i}) - (-\e (\u|_{K_i})^{\prime\prime} + \Pi_{K_i}(d \u|_{K_i}))).
\] 
We write 
\begin{align*}
\left\|\Phi_K^{-\nicefrac{\beta}{2}} v_{K_i}\right\|^2_{L^2(K_i)} &=\int_{K_i} (\Pi_{K_i} f - (-\e \u^{\prime\prime} + \Pi_{K_i} (d \u)) v_{K_i}\dx \\
&= \int_{K_i} (f - (-\e \u^{\prime\prime} + d \u)) v_{K_i} \dx + 
   \int_{K_i} (\Pi_{K_i} f - f) v_{K_i} \dx \\
&   \quad- 
   \int_{K_i} (\Pi_{K_i} (d\u) - d\u) v_{K_i} \dx  \\
&=: I_1 + I_2 + I_3.
\end{align*}

We first focus on the term $I_1$. Since the function $v_{K_i}$ vanishes at the endpoints of $K_i$, we may view it, by extension by zero outside of~$K_i$, as an element of $H^1_0(\Omega)$. We observe 
$$
I_1 = a(u,v_{K_i}) - a(\u,v_{K_i}) = a(u - \u,v_{K_i}) \leq \NNN{u -\u}_{K_i} \NNN{v_{K_i}}_{K_i},
$$
where the subscript $K_i$ in the norms indicates that the defining integral is taken over $K_i$ and not over $\Omega$. 

We now claim that, for $\beta \in (\nicefrac12,1]$, we have 
\begin{equation}
\label{eq:lemma:MW-lemma-3.3-10}
\NNN{v_{K_i}}_{K_i} \leq C \left[ \sqrt\e p_i^{2-\beta} h_i^{-1} + \left\|\sqrt{|d|}\right\|_{L^\infty(K_i)} \right]\left\|\Phi^{-\nicefrac{\beta}{2}} v_{K_i}\right\|_{L^2(K_i)}. 
\end{equation}
To see this, we compute with the product rule 
\begin{align*}
\|v_{K_i}^\prime\|_{L^2(K_i)} &\lesssim 
\left\|\Phi_{K_i}^{\beta} (\Pi_{K_i} f - (-\e \u^{\prime\prime} + \Pi_{K_i} (d\u)))^\prime\right\|_{L^2(K_i)}\\
&\quad + 
h_{i}^{-1} \left\|\Phi_{K_i}^{\beta-1} (\Pi_{K_i} f - (-\e \u^{\prime\prime} + \Pi_{K_i} (d\u))\right\|_{L^2(K_i)},
\end{align*}
and use the fact that $\Pi_{K_i}f - (-\e \u^{\prime\prime} + \Pi_{K_i} (d\u))$ is a polynomial: 
For the first term, we employ \cite[Lemma~{2.4}, $3^{rd}$ estimate]{MelenkAPOST01}, and 
for the second term, we apply \cite[Lemma~{2.4}, $2^{nd}$ estimate]{MelenkAPOST01} (this is the point where we need $\beta > \nicefrac12$ so that $2(\beta-1) > -1$) to get 
\begin{align*}
\Big\|\Phi_{K_i}^{\beta} (&\Pi_{K_i} f - (-\e \u^{\prime\prime} + \Pi_{K_i} (d\u)))^\prime\Big\|_{L^2(K_i)} \\
& \lesssim 
p_i^{2-\beta} h_i^{-1}\left\|\Phi_{K_i}^{\nicefrac{\beta}{2}} (\Pi_{K_i} f - (-\e \u^{\prime\prime} + \Pi_{K_i} (d\u)))\right\|_{L^2(K_i)}\\
& = p_i^{2-\beta} h_i^{-1}\left\|\Phi_{K_i}^{-\nicefrac{\beta}{2}} v_{K_i}\right\|_{L^2(K_i)}, 
 \end{align*}
 and analogously,
 \begin{align*}
h_{i}^{-1} \Big\|\Phi_{K_i}^{\beta-1} (\Pi_{K_i} f - (-\e \u^{\prime\prime} + \Pi_{K_i} (d\u)))\Big\|_{L^2(K_i)} 
& \lesssim 
 p_i^{2-\beta} h_i^{-1}\left\|\Phi_{K_i}^{-\nicefrac{\beta}{2}} v_{K_i}\right\|_{L^2(K_i)}. 
\end{align*}
Furthermore, we note the simple estimate 
$$
\left\| \sqrt{|d|} v_{K_i} \right\|_{L^2(K_i)} \leq \left\|\sqrt{|d|} \right\|_{L^\infty(K_i)} \left\|\Phi_{K_i}^{-\nicefrac{\beta}{2}} v_{K_i}\right\|_{L^2(K_i)}. 
$$
It follows that
\[
\NNN{v_{K_i}}_{K_i} \lesssim \left( \sqrt{\e} p_i^{2-\beta} h_i^{-1} + \left\|\sqrt{|d|}\right\|_{L^\infty(K_i)} \right) \left\|\Phi_{K_i}^{-\nicefrac{\beta}{2}} v_{K_i}\right\|_{L^2(K_i)}, 
\]
which is the claimed estimate \eqref{eq:lemma:MW-lemma-3.3-10}. 

The terms $I_2$ and $I_3$ are estimated straightforwardly by 
$$
|I_2| + |I_3| \leq 
\left( \left\|\Phi_{K_i}^{\nicefrac{\beta}{2}} (f - \Pi_{K_i} f)\right\|_{L^2(K_i)} + 
\left\|\Phi_{K_i}^{\nicefrac{\beta}{2}} (d \u - \Pi_{K_i} (d \u))\right\|_{L^2(K_i)} \right) \left\|\Phi_{K_i}^{-\nicefrac{\beta}{2}} v_{K_i}\right\|_{L^2(K_i)}. 
$$

We conclude for any $\beta \in (\nicefrac12,1]$ the existence of a constant~$C > 0$ (depending only on $\beta$) such that 
\begin{equation}
\label{eq:lemma:MW-lemma-3.3-20}
\begin{split}
\left\|\Phi_{K_i}^{-\nicefrac{\beta}{2}} v_{K_i} \right\|_{L^2(K_i)} 
&\lesssim  
\left( \sqrt{\e} p_i^{2-\beta} h_i^{-1} + \left\|\sqrt{|d|}\right\|_{L^\infty(K_i)} \right) \NNN{u - \u}_{K_i}\\
&\quad + 
 \left\|\Phi_{K_i}^{\nicefrac{\beta}{2}} (f - \Pi_{K_i} f)\right\|_{L^2(K_i)} + 
\left\|\Phi_{K_i}^{\nicefrac{\beta}{2}} (d \u - \Pi_{K_i} (d \u))\right\|_{L^2(K_i)}.
\end{split}
\end{equation}

We now turn to bounding the volume contribution of the {\sl a posteriori}  error estimator. 
We fix $\beta \in (\nicefrac12,1]$, and estimate with the aid of~\cite[Lemma~{2.4}, $2^{nd}$ estimate]{MelenkAPOST01}:
\begin{align*}
\|f& - (-\e \u^{\prime\prime} + d \u)\|_{L^2(K_i)} \\
&\leq 
\left\|\Pi_{K_i} f - (-\e \u^{\prime\prime} + \Pi_{K_i} (d \u))\right\|_{L^2(K_i)} + 
\|f - \Pi_{K_i} f\|_{L^2(K_i)} + 
\|d \u - \Pi_{K_i} (d \u)\|_{L^2(K_i)}  \\
&\lesssim p_i^{\beta} \left\|\Phi_{K_i}^{\nicefrac{\beta}{2}} \left( \Pi_{K_i} f - (-\e \u^{\prime\prime} + \Pi_{K_i} (d \u))\right)\right\|_{L^2(K_i)}  \\
&\quad+\|f - \Pi_{K_i} f\|_{L^2(K_i)} + 
\|d \u - \Pi_{K_i} (d \u)\|_{L^2(K_i)}  \\
& = p_i^{\beta}  \|\Phi_{K_i}^{-\nicefrac{\beta}{2}} v_{K_i}\|_{L^2(K_i)}  + 
\|f - \Pi_{K_i} f\|_{L^2(K_i)} + 
\|d \u - \Pi_{K_i} (d \u)\|_{L^2(K_i)}, 
\end{align*}
which, recalling~\eqref{eq:lemma:MW-lemma-3.3-20}, results in (with implied constant depending on $\beta \in (\nicefrac12,1]$)
\begin{align*}
\|f - (-\e \u^{\prime\prime} + d \u)\|_{L^2(K_i)} 
&\lesssim 
\left(
\sqrt{\e} \frac{p_i^2}{h_i} + p_i^\beta \left\|\sqrt{|d|}\right\|_{L^\infty(K_i)}
\right) \NNN{u - \u}_{K_i} \\
& \quad + 
p_i^\beta
\left[ 
\left\|\Phi_{K_i}^{\nicefrac{\beta}{2}} (f - \Pi_{K_i} f)\right\|_{L^2(K_i)} + 
\left\|\Phi_{K_i}^{\nicefrac{\beta}{2}} (d \u - \Pi_{K_i} (d \u))\right\|_{L^2(K_i)} 
\right] \\
&\quad 
+ 
\|f - \Pi_{K_i} f\|_{L^2(K_i)} + 
\|d \u - \Pi_{K_i} (d \u)\|_{L^2(K_i)} . 
\end{align*}
This completes the proof.
\end{proof}

%

For every interior node $x_i$, $i=1,\ldots,N-1$, let $\omega_i:= (x_{i-1},x_{i+1})$ 
be the node patch associated with node $x_i$.
 
\begin{lemma}
\label{lemma:efficiency-jump-term}
Let $x_i$ be an interior node with node patch $\omega_i$, and~$\u$ the $hp$-FEM solution from~\eqref{eq:hpFEM}. For any $\delta_i>0$, there   holds 
\begin{equation}\label{eq:lemmaB3}
\e |\jmp{\u^\prime}(x_i)| 
\leq \left( \left(\e\delta_i^{-1}\right)^{\nicefrac12} + \delta_i^{\nicefrac12} \left\|\sqrt{|d|}\right\|_{L^\infty(\omega_i)}\right)\NNN{u - \u}_{\omega_i}
+ \delta_i^{\nicefrac12} \|r_i\|_{L^2(\omega_i)},
\end{equation}
where~$r_i:= f - (-\e \u^{\prime\prime} + d \u)$ is a function defined on~$\omega_i\setminus\{x_i\}$. 
\end{lemma}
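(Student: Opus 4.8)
The plan is to prove this efficiency (lower) bound by a localized residual argument in the spirit of the techniques of~\cite{Ve1998,MelenkAPOST01}: I test the Galerkin residual against a bump function that is concentrated at the interior node~$x_i$ on the length scale~$\delta_i$, and read off the jump from the resulting identity. First I would derive a localized residual identity on the node patch~$\omega_i=K_i\cup K_{i+1}\cup\{x_i\}$. For~$v\in H^1_0(\omega_i)$, extended by zero to~$\Omega$, the continuous weak formulation~\eqref{eq:var} gives $a(u-\u,v)=\int_\Omega fv\dx-a(\u,v)$. Integrating the term $\e\int_{\omega_i}\u'v'\dx$ by parts separately on~$K_i$ and~$K_{i+1}$, and using that~$v$ vanishes at the outer patch nodes~$x_{i-1}$ and~$x_{i+1}$, all interelement boundary contributions cancel except the one at~$x_i$, which assembles into $\jmp{\u'}(x_i)$. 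This yields
\[
a(u-\u,v)=\e\jmp{\u'}(x_i)\,v(x_i)+\int_{\omega_i} r_i\,v\dx,\qquad v\in H^1_0(\omega_i),
\]
with $r_i=f+\e\u''-d\u$ as in the statement.

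Next I would choose the test function so as to extract the jump with the correct scaling in~$\delta_i$. I take $v\in H^1_0(\omega_i)$ with $v(x_i)=1$, concentrated near~$x_i$ on the scale~$\delta_i$; the sharp (constant-free) choice is the exponential profile $v(x)=e^{-|x-x_i|/\delta_i}$, corrected so that it vanishes at~$x_{i\pm1}$ and hence lies in~$H^1_0(\omega_i)$. This profile is precisely the extremizer that realizes $\NN{v}_{L^2(\omega_i)}^2\le\delta_i$ and $\NN{v'}_{L^2(\omega_i)}^2\le\delta_i^{-1}$ \emph{simultaneously}. Setting $v(x_i)=1$ in the identity and rearranging gives $\e\jmp{\u'}(x_i)=a(u-\u,v)-\int_{\omega_i}r_iv\dx$.

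I then bound the two contributions. For the bilinear form I use its continuity with respect to~$\NNN{\cdot}$ (Cauchy--Schwarz applied to the two components of~$a(\cdot,\cdot)$, with~$|d|$ absorbing a possible sign change of~$d$), namely $\N{a(u-\u,v)}\le\NNN{u-\u}_{\omega_i}\NNN{v}_{\omega_i}$, together with
\[
\NNN{v}_{\omega_i}^2=\e\NN{v'}_{L^2(\omega_i)}^2+\NN{\sqrt{|d|}\,v}_{L^2(\omega_i)}^2\le \e\delta_i^{-1}+\NN{\sqrt{|d|}}_{L^\infty(\omega_i)}^2\,\delta_i,
\]
so that $\NNN{v}_{\omega_i}\le(\e\delta_i^{-1})^{\nicefrac12}+\delta_i^{\nicefrac12}\NN{\sqrt{|d|}}_{L^\infty(\omega_i)}$. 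For the residual term I use Cauchy--Schwarz, $\N{\int_{\omega_i}r_iv\dx}\le\NN{r_i}_{L^2(\omega_i)}\NN{v}_{L^2(\omega_i)}\le\delta_i^{\nicefrac12}\NN{r_i}_{L^2(\omega_i)}$. Combining these two estimates with the rearranged identity yields exactly~\eqref{eq:lemmaB3}.

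I expect the main obstacle to be the construction of the test function~$v$: obtaining the sharp, constant-free bounds $\NN{v}_{L^2}^2\le\delta_i$ and $\NN{v'}_{L^2}^2\le\delta_i^{-1}$ at once forces the extremal exponential profile (a mere piecewise-linear hat is off by a fixed factor), while membership in~$H^1_0(\omega_i)$ requires~$v$ to vanish at the neighbouring nodes. The delicate points are therefore the node correction of the exponential so that no spurious boundary or jump terms survive at~$x_{i\pm1}$, and the regime $\delta_i\gtrsim\min\{h_i,h_{i+1}\}$ where the profile can no longer be freely concentrated inside~$\omega_i$; there, however, the right-hand side of~\eqref{eq:lemmaB3} only grows, so a capped profile still suffices.
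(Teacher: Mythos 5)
Your argument is structurally the same as the paper's proof: the identical localized residual identity on $\omega_i$ (the paper obtains it by elementwise integration by parts against the test function $\widetilde\psi_i=\jmp{\u'}(x_i)\psi_i$ directly), a test function equal to $1$ at $x_i$ and concentrated on the scale $\delta_i$, continuity of $a(\cdot,\cdot)$ with respect to $\NNN{\cdot}$, and Cauchy--Schwarz for the residual term. The only point of divergence is the test function itself: the paper simply \emph{posits} a cut-off $\psi_i\in H^1_0(\omega_i)$ with $\psi_i(x_i)=1$, $\NN{\psi_i}_{L^2(\omega_i)}\le\delta_i^{\nicefrac12}$ and $\NN{\psi_i'}_{L^2(\omega_i)}\le\delta_i^{-\nicefrac12}$, whereas you attempt an explicit construction with constant-free bounds.

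That construction cannot deliver what you claim, and this is a genuine gap---one that is, in fairness, also present silently in the paper. For any $v\in H^1_0(\omega_i)$ with $v(x_i)=1$, integrating $(v^2)'$ over $K_i$ and over $K_{i+1}$ separately and applying Cauchy--Schwarz twice gives
\begin{equation*}
2 \;=\; \int_{K_i}\bigl(v^2\bigr)'\dx-\int_{K_{i+1}}\bigl(v^2\bigr)'\dx
\;\le\; 2\,\NN{v}_{L^2(K_i)}\NN{v'}_{L^2(K_i)}+2\,\NN{v}_{L^2(K_{i+1})}\NN{v'}_{L^2(K_{i+1})}
\;\le\; 2\,\NN{v}_{L^2(\omega_i)}\NN{v'}_{L^2(\omega_i)},
\end{equation*}
and equality would force $v$ to be a one-sided exponential on each element, which is incompatible with $v(x_{i\pm1})=0$. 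Hence $\NN{v}_{L^2(\omega_i)}\NN{v'}_{L^2(\omega_i)}>1$ strictly, so the bounds $\NN{v}_{L^2(\omega_i)}^2\le\delta_i$ and $\NN{v'}_{L^2(\omega_i)}^2\le\delta_i^{-1}$ are \emph{never} simultaneously attainable in $H^1_0(\omega_i)$, no matter how small $\delta_i$ is: any correction of $e^{-|x-x_i|/\delta_i}$ that restores the zero boundary values necessarily inflates one of the two norms. For $\delta_i$ small relative to the patch this costs only a factor $1+o(1)$ (a truncated exponential), which is harmless since the lemma is invoked in Theorem~\ref{thm:efficiency-appendix} with $\delta_i=\nicefrac{\alpha_i}{\gamma_i}\sim\sqrt{\e\alpha_i}\lesssim\nicefrac{h_i}{p_i}$ and a generic constant anyway; but it does mean that neither your argument nor the paper's literally proves the constant-free inequality \eqref{eq:lemmaB3}. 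Your fallback for the opposite regime is also incorrect: once $\delta_i^{-1}<\nicefrac{1}{h_i}+\nicefrac{1}{h_{i+1}}$, the gradient bound is unattainable outright (the hat function minimizes the Dirichlet energy), and it is not true that ``the right-hand side only grows''---the term $(\e\delta_i^{-1})^{\nicefrac12}\NNN{u-\u}_{\omega_i}$ \emph{decreases} in $\delta_i$, so a capped profile proves a weaker statement, not the stated one. For large $\delta_i$ the estimate survives only because $\u$ is the Galerkin solution of \eqref{eq:hpFEM}: testing the identity with the hat function $\varphi_i\in\V$ and using $a(u-\u,\varphi_i)=0$ yields
\begin{equation*}
\e\,\N{\jmp{\u'}(x_i)}=\left|\int_{\omega_i}r_i\varphi_i\dx\right|
\le\sqrt{\tfrac{h_i+h_{i+1}}{3}}\,\NN{r_i}_{L^2(\omega_i)}\le\delta_i^{\nicefrac12}\NN{r_i}_{L^2(\omega_i)}
\qquad\text{for }\delta_i\ge\tfrac{h_i+h_{i+1}}{3},
\end{equation*}
an orthogonality argument that neither you nor the paper invokes.
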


\begin{proof}
Let $x_i$ be an interior node, and~$\delta_i>0$. Moreover, let $\psi_i\in H^1_0(\omega_i)$ be a cut-off function with~$\psi_i(x_i)=1$, and
\[
\|\psi_i\|_{L^2(\omega_i)} \leq \delta_i^{\nicefrac12}, 
\qquad 
\|\psi^\prime_i\|_{L^2(\omega_i)} \leq \delta_i^{-\nicefrac12}. 
\]
Then, the function $\widetilde \psi_i:= \jmp{\u^\prime}(x_i) \psi_i$ belongs to $H^1_0(\omega_i)$ (and is extended by zero to yield a function in~$H^1_0(\Omega)$). An integration by parts gives 
\begin{align*}
\e |\jmp{\u^\prime}(x_i)|^2 = \e \jmp{\u^\prime}(x_i) \widetilde \psi_i(x_i) 
&= - \int_{\omega_i} \e \u^\prime \widetilde \psi_i^\prime\,\dx  
- \int_{\omega_i} \e\u^{\prime\prime} \widetilde \psi_i\,\dx \\
&= a(u - \u,\widetilde \psi_i) - \int_{\omega_i} r_i \widetilde \psi_i\,\dx, 
\end{align*}
and thus 
\begin{align*}
\e |\jmp{\u^\prime}(x_i)|^2 &\leq |\jmp{\u^\prime}(x_i)| 
\left[ \NNN{u - \u}_{\omega_i}  \NNN{\psi_i}_{\omega_i} + 
\|r_i\|_{L^2(\omega_i)} \|\psi_i\|_{L^2(\omega_i)}
\right]. 
\end{align*}
We conclude with the properties of $\psi_i$: 
\begin{align*}
\e |\jmp{\u^\prime}(x_i)| 
\leq \left( \left(\e\delta_i^{-1}\right)^{\nicefrac12} + \delta_i^{\nicefrac12} \left\|\sqrt{|d|}\right\|_{L^\infty(\omega_i)}\right)\NNN{u - \u}_{\omega_i}
+ \delta_i^{\nicefrac12} \|r_i\|_{L^2(\omega_i)},
\end{align*}
which is the asserted estimate.
\end{proof}

As already mentioned in Remark~\ref{rem:efficiency-volume-term}, a particularly good setting for efficiency estimates is that the coefficient function $d$ is bounded from below. 

\begin{theorem}\label{thm:efficiency-appendix}
Suppose that there exist constants~$0<d_0\le d_1<\infty$ with
\begin{equation*}
d_0 \leq \inf_{x \in \Omega} d(x) \leq \sup_{x \in \Omega} d(x) \leq d_1. 
\end{equation*}
Fix $\beta \in (\nicefrac12,1]$.  Then there exists a constant $C > 0$ (depending only on $\beta$, the ratio~$\nicefrac{d_1}{d_0}$, and the shape-regularity parameter $\mu$ from~\eqref{eq:gamma-shape-regular}) such that the following is true:
\begin{enumerate}[(i)]
\item \label{item:i}
Let $K_i$, $1\le i\le N$, be an element. Then, 
\begin{align*}
\alpha_i \|f - (-\e \u^{\prime\prime} + d \u)\|^2_{L^2(K_i)} &
\leq C \left[ p_i^2 \NNN{u - \u}^2_{K_i} + \alpha_i R^2_{K_i}\right], 
\end{align*}
where we set  
\begin{align*}
R_{K_i} &= 
{p_i^\beta}
\left[ 
\left\|\Phi_{K_i}^{\nicefrac{\beta}{2}} (f - \Pi_{K_i} f)\right\|_{L^2(K_i)} + 
\left\|\Phi_{K_i}^{\nicefrac{\beta}{2}} (d \u - \Pi_{K_i} (d \u))\right\|_{L^2(K_i)} 
\right] \\
&\quad+ 
\|f - \Pi_{K_i} f\|_{L^2(K_i)} + 
\|d \u - \Pi_{K_i} (d \u)\|_{L^2(K_i)} .
\end{align*}
\item 
\label{item:thm:efficiency-appendix-ii}
Let $\omega_i = K_{i} \cup K_{i+1} \cup \{x_i\}$ be the node patch associated with the interior node~$x_i$, $1\le i\le N-1$. Then 
\begin{align*}
\gamma_i \e^2 |\jmp{\u}(x_i)|^2 
\leq C \left[ p^2_{i} \NNN{u - \u}^2_{\omega_i} 
+ \alpha_{i} R^2_{K_i} 
+ \alpha_{i+1} R^2_{K_{i+1}} 
\right]. 
\end{align*}
\end{enumerate}
\end{theorem}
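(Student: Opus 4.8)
The plan is to derive both estimates by squaring the local \emph{upper} bounds already established for the element residual (Lemma~\ref{lemma:MW-lemma-3.3}) and for the flux jump (Lemma~\ref{lemma:efficiency-jump-term}), multiplying by the weights~$\alpha_i$ and~$\gamma_i$, respectively, and then absorbing the resulting coefficients by means of the explicit scalings of~$\alpha_i$,~$\beta_i$,~$\gamma_i$ from~\eqref{eq:alpha}--\eqref{eq:gamma} together with the uniform two-sided bound~$d_0\le d\le d_1$.

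For part~(i), I would begin from Lemma~\ref{lemma:MW-lemma-3.3}, observing that its right-hand side is exactly $C_\beta\big[\big(\sqrt\e\,p_i^2 h_i^{-1}+p_i^\beta\NN{\sqrt{|d|}}_{L^\infty(K_i)}\big)\NNN{u-\u}_{K_i}+R_{K_i}\big]$, with~$R_{K_i}$ as in the statement. Squaring, using $(a+b)^2\le 2a^2+2b^2$, and multiplying by~$\alpha_i$ then reduces the claim to the single scaling inequality $\alpha_i\big(\sqrt\e\,p_i^2 h_i^{-1}+p_i^\beta\NN{\sqrt{|d|}}_{L^\infty(K_i)}\big)^2\le Cp_i^2$, which is precisely~\eqref{eq:foo-10} of Remark~\ref{rem:efficiency-volume-term}. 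I would verify this directly from~\eqref{eq:alpha}: since $\alpha_i\le\e^{-1}h_i^2 p_i^{-2}$ one has $\sqrt{\alpha_i}\,\sqrt\e\,p_i^2 h_i^{-1}\le p_i$, while $\alpha_i\le\NN{\nicefrac{1}{d}}_{L^\infty}\le d_0^{-1}$ combined with $\NN{\sqrt{|d|}}_{L^\infty}\le\sqrt{d_1}$ and $\beta\le 1$ gives $\sqrt{\alpha_i}\,p_i^\beta\NN{\sqrt{|d|}}_{L^\infty(K_i)}\le\sqrt{d_1/d_0}\,p_i$; adding the two yields the constant $1+\sqrt{d_1/d_0}$, which depends only on~$d_1/d_0$ and~$\beta$.

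For part~(ii) I would invoke Lemma~\ref{lemma:efficiency-jump-term} with the specific choice $\delta_i:=\sqrt{\e\,\alpha_i}$. Squaring the resulting bound and multiplying by~$\gamma_i$ produces three types of terms. The two coefficients multiplying $\NNN{u-\u}^2_{\omega_i}$ are $\gamma_i\e\delta_i^{-1}=\gamma_i\sqrt{\e/\alpha_i}$ and $\gamma_i\delta_i\NN{\sqrt{|d|}}^2_{L^\infty(\omega_i)}\le\gamma_i\sqrt{\e\alpha_i}\,d_1$; using $\gamma_i\le\beta_i\le 3\sqrt{\e^{-1}\alpha_i}$ from Remark~\ref{rm:constants}, the first is bounded by~$3$ and the second by $3\alpha_i d_1\le 3 d_1/d_0$, so both are $\mathcal O(1)\le Cp_i^2$. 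The remaining term is $\gamma_i\delta_i\NN{r_i}^2_{L^2(\omega_i)}$ with $\gamma_i\delta_i\le 3\alpha_i$; here I would split $\omega_i$ into~$K_i$ and~$K_{i+1}$ and bound $\NN{r_i}^2_{L^2(K_j)}\le C\big[p_j^2\alpha_j^{-1}\NNN{u-\u}^2_{K_j}+R^2_{K_j}\big]$, $j\in\{i,i+1\}$, by dividing the estimate of part~(i) by~$\alpha_j$. On~$K_i$ the factor~$\alpha_i$ then cancels to leave $Cp_i^2\NNN{u-\u}^2_{K_i}+C\alpha_i R^2_{K_i}$, exactly the desired form. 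Finally, the $\mu$-shape regularity~\eqref{eq:gamma-shape-regular} together with $d_0\le d\le d_1$ keeps the ratio~\eqref{eq:ratio} bounded and hence makes $\alpha_i\sim\alpha_{i+1}$ and $p_i\sim p_{i+1}$ comparable, which allows the $K_{i+1}$-contribution $3\alpha_i\big[p_{i+1}^2\alpha_{i+1}^{-1}\NNN{u-\u}^2_{K_{i+1}}+R^2_{K_{i+1}}\big]$ to be absorbed into $Cp_i^2\NNN{u-\u}^2_{\omega_i}+C\alpha_{i+1}R^2_{K_{i+1}}$, completing the bound.

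I expect the main obstacle to be the correct choice of the free parameter~$\delta_i$ in part~(ii): it must be large enough that the $\e\delta_i^{-1}$ contribution does not blow up as $\e\to 0$, yet small enough that the $\delta_i\NN{r_i}^2$ contribution remains proportional to $\alpha_i R^2$. The geometric mean $\delta_i=\sqrt{\e\alpha_i}$ is precisely what balances the factor $\e^{-1}\alpha_i$ hidden in~$\gamma_i$ against the factor $\alpha_i^{-1}$ arising from inverting part~(i); checking that this single choice simultaneously controls all three term types—and that the transfer of weights between the neighbouring elements~$K_i$ and~$K_{i+1}$ introduces only constants depending on~$\mu$ and $d_1/d_0$—is the delicate bookkeeping step.
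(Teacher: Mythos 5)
Your proposal is correct and follows essentially the same route as the paper: part (i) is Lemma~\ref{lemma:MW-lemma-3.3} combined with the scaling estimate~\eqref{eq:foo-10}, and part (ii) applies Lemma~\ref{lemma:efficiency-jump-term} with a balanced choice of $\delta_i$ and then recycles part (i) to absorb $\alpha_i\|r_i\|^2_{L^2(\omega_i)}$. Your choice $\delta_i=\sqrt{\e\alpha_i}$ coincides, up to bounded constants, with the paper's $\delta_i=\nicefrac{\alpha_i}{\gamma_i}$ (since $\gamma_i\sim\sqrt{\e^{-1}\alpha_i}$), and your explicit splitting of $\omega_i$ into $K_i$ and $K_{i+1}$ with the comparability $\alpha_i\sim\alpha_{i+1}$, $p_i\sim p_{i+1}$ merely spells out a step the paper leaves implicit.
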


\begin{proof} 
The estimate in~\eqref{item:i} follows directly from Lemma~\ref{lemma:MW-lemma-3.3} 
and the observation \eqref{eq:foo-10}. 
For \eqref{item:thm:efficiency-appendix-ii} we employ Lemma~\ref{lemma:efficiency-jump-term}. 
Let $K_i$, $K_{i+1}$ be the two elements sharing node $x_i$. By the shape regularity property~\eqref{eq:gamma-shape-regular}, and recalling Remark~\ref{rm:constants} we see that
$$
\gamma_i \sim \sqrt{\e^{-1} \alpha_{i}} \sim \sqrt{\e^{-1} \alpha_{i+1}}. 
$$
We will simply write $\alpha$ for $\alpha_{i}$ and $\gamma$ for $\gamma_i$. 
We make use of the freedom to select 
$
\delta_i:= \nicefrac{\alpha}{\gamma}
$
in Lemma~\ref{lemma:efficiency-jump-term}.
Then, employing~\eqref{eq:lemmaB3} and involving~$\gamma \sim \sqrt{\e^{-1} \alpha}$, we arrive at
\begin{align*}
\gamma \e^2 | \jmp{\u^\prime}(x_i)|^2 &\lesssim 
\gamma \left( \frac{\e}{\delta_i} + \delta_i\|d\|_{L^\infty(\omega_i)}\right)\NNN{u - \u}^2_{\omega_i} + \gamma \delta_i \|r_i\|^2_{L^2(\omega_i)} \\
& = \left (\frac{\gamma^2}{\alpha}\e + \alpha\|d\|_{L^\infty(\omega_i)} \right)\NNN{u - \u}^2_{\omega_i} + \alpha \|r_i\|^2_{L^2(\omega_i)} \\
&\lesssim (1 + \nicefrac{d_1}{d_0}) \NNN{u - \u}^2_{\omega_i} + \alpha \|r_i\|^2_{L^2(\omega_i)}. 
\end{align*}
We close the proof by remarking that the term $\alpha \|r_i\|^2_{L^2(\omega_i)}$ has been estimated earlier in \eqref{item:i}.
\end{proof}

\clearpage

\bibliographystyle{amsplain} \bibliography{literature}

\providecommand{\bysame}{\leavevmode\hbox to3em{\hrulefill}\thinspace}
\providecommand{\MR}{\relax\ifhmode\unskip\space\fi MR }
\providecommand{\MRhref}[2]{%
  \href{http://www.ams.org/mathscinet-getitem?mr=#1}{#2}
}
\providecommand{\href}[2]{#2}
\begin{thebibliography}{10}

\bibitem{ainsworth-oden00}
M.~Ainsworth and T.J. Oden, \emph{A posteriori error estimation in finite
  element analysis}, Wiley, 2000.

\bibitem{AiSe98}
M.~Ainsworth and B.~Senior, \emph{An adaptive refinement strategy for
  {$hp$}-finite element computations}, Proceedings of the {I}nternational
  {C}entre for {M}athematical {S}ciences {C}onference on {G}rid {A}daptation in
  {C}omputational {PDE}s: {T}heory and {A}pplications ({E}dinburgh, 1996),
  vol.~26, 1998, pp.~165--178.

\bibitem{babuska-strouboulis01}
I.~Babu{\v s}ka and T.~Strouboulis, \emph{The finite element method and its
  reliability}, Oxford University Press, 2001.

\bibitem{braess-schoeberl-pillwein09}
D.~Braess, V.~Pillwein, and J.~Sch{\"o}berl, \emph{Equilibrated residual error
  estimates are {$p$}-robust}, Comput. Methods Appl. Mech. Engrg. \textbf{198}
  (2009), no.~13-14, 1189--1197. \MR{2500243 (2010b:65239)}

\bibitem{buerg-doerfler11}
M.~B{\"u}rg and W.~D{\"o}rfler, \emph{Convergence of an adaptive {$hp$} finite
  element strategy in higher space-dimensions}, Appl. Numer. Math. \textbf{61}
  (2011), no.~11, 1132--1146. \MR{2842135}

\bibitem{fourier}
C.~Canuto, R.~H. Nochetto, and M.~Verani, \emph{Adaptive {F}ourier-{G}alerkin
  methods}, Math. Comp. \textbf{83} (2014), no.~288, 1645--1687. \MR{3194125}

\bibitem{cascon-kreuzer-nochetto-siebert08}
J.~M. Cascon, C.~Kreuzer, R.~H. Nochetto, and K.~G. Siebert,
  \emph{Quasi-optimal convergence rate for an adaptive finite element method},
  SIAM J. Numer. Anal. \textbf{46} (2008), no.~5, 2524--2550. \MR{2421046
  (2009h:65174)}

\bibitem{De07}
L.~Demkowicz, \emph{Computing with {$hp$}-adaptive finite elements. {V}ol. 1},
  Chapman \& Hall/CRC Applied Mathematics and Nonlinear Science Series, Chapman
  \& Hall/CRC, Boca Raton, FL, 2007, One and two dimensional elliptic and
  Maxwell problems, With 1 CD-ROM (UNIX).

\bibitem{DoHe07}
W.~D{\"o}rfler and V.~Heuveline, \emph{Convergence of an adaptive {$hp$} finite
  element strategy in one space dimension}, Appl. Numer. Math. \textbf{57}
  (2007), no.~10, 1108--1124.

\bibitem{doerfler-sauter13}
W.~D{\"o}rfler and S.~Sauter, \emph{A {P}osteriori {E}rror {E}stimation for
  {H}ighly {I}ndefinite {H}elmholtz {P}roblems}, Comput. Methods Appl. Math.
  \textbf{13} (2013), no.~3, 333--347. \MR{3094621}

\bibitem{eibner-melenk06}
T.~Eibner and J.~M. Melenk, \emph{An adaptive strategy for $hp$-{FEM} based on
  testing for analyticity}, Comp. Mech. \textbf{39} (2007), 575--595.

\bibitem{FaWiWi14}
T.~Fankhauser, T.~P. Wihler, and M.~Wirz, \emph{The {$hp$}-adaptive {FEM} based
  on continuous {S}obolev embeddings: isotropic refinements}, Comput. Math.
  Appl. \textbf{67} (2014), no.~4, 854--868. \MR{3163883}

\bibitem{feischl-fuehrer-praetorius14}
M.~Feischl, T.~F{\"u}hrer, and D.~Praetorius, \emph{Adaptive {FEM} with optimal
  convergence rates for a certain class of nonsymmetric and possibly nonlinear
  problems}, SIAM J. Numer. Anal. \textbf{52} (2014), no.~2, 601--625.
  \MR{3176325}

\bibitem{GuiBabuska86}
W.~Gui and I.~Babu\v{s}ka, \emph{The $h$, $p$ and $h-p$ versions of the finite
  element method in one-dimension}, Numer. Math. \textbf{49} (1986), 577--683.

\bibitem{HoustonSeniorSuliENUMATH}
P.~Houston, B.~Senior, and E.~S{\" u}li, \emph{Sobolev regularity estimation
  for $hp$--adaptive finite element methods}, Numerical Mathematics and
  Advanced Applications ENUMATH 2001 (F.~Brezzi, A.~Buffa, S.~Corsaro, and
  A.~Murli, eds.), Springer, 2003, pp.~631--656.

\bibitem{HoustonSuliHPADAPT}
P.~Houston and E.~S{\"u}li, \emph{A note on the design of $hp$--adaptive finite
  element methods for elliptic partial differential equations}, Comput. Methods
  Appl. Mech. Engrg. \textbf{194(2-5)} (2005), 229--243.

\bibitem{karkulik-melenk12}
M.~Karkulik and J.~M. Melenk, \emph{Local high-order regularization and
  applications to \textit{hp}-methods}, Tech. Report arXiv:1411.5209, 2014.

\bibitem{karkulik-melenk-rieder14}
M.~Karkulik, J.~M. Melenk, and A.~Rieder, \emph{Optimal additive {S}chwarz
  methods for the $p$-{BEM}: the hypersingular integral equation}, Tech. Report
  in prep., Institute for Analysis and Scientific Computing, Vienna University
  of Technolgy, 2015.

\bibitem{Ma94}
C.~Mavriplis, \emph{Adaptive mesh strategies for the spectral element method},
  Comput. Methods Appl. Mech. Engrg. \textbf{116} (1994), no.~1-4, 77--86,
  ICOSAHOM'92 (Montpellier, 1992).

\bibitem{MelenkCLEM}
J.~M. Melenk, \emph{$hp$--{I}nterpolation of non-smooth functions}, SIAM J.
  Numer. Anal. \textbf{43} (2005), 127--155.

\bibitem{babuska-melenk96}
J.~M. Melenk and I.~Babuska, \emph{The partition of unity finite element
  method: Basic theory and applications}, Comput. Methods Appl. Mech. Engrg.
  \textbf{139} (1996), 289--314.

\bibitem{MelenkAPOST01}
J.~M. Melenk and B.~I. Wohlmuth, \emph{On residual-based a posteriori error
  estimation in $hp$-{FEM}}, Adv. Comp. Math. \textbf{15} (2001), 311--331.

\bibitem{MiMc11}
W.~F. Mitchell and M.~A. McClain, \emph{A survey of {$hp$}-adaptive strategies
  for elliptic partial differential equations}, Recent advances in
  computational and applied mathematics, Springer, Dordrecht, 2011,
  pp.~227--258. \MR{3026197}

\bibitem{OdPaFe92}
J.~T. Oden, A.~Patra, and Y.~S. Feng, \emph{An $hp$-adaptive strategy},
  Adaptive, Multilevel, and Hierarchical Computational Strategies, vol. 157,
  ASME Publication, New York, 1992, pp.~23--26.

\bibitem{ROD89}
W.~Rachowicz, J.~T. Oden, and L.~Demkowicz, \emph{Toward a universal
  $hp$-adaptive finite element strategy. {P}art~3: {D}esign of $hp$ meshes},
  Comput. Methods Appl. Mech. Engrg. \textbf{77} (1989), 181--212.

\bibitem{stevenson07}
R.~P. Stevenson, \emph{Optimality of a standard adaptive finite element
  method}, Found. Comput. Math. \textbf{7} (2007), 245--269.

\bibitem{Ve1998}
R.~Verf\"urth, \emph{Robust a posteriori error estimators for a singularly
  perturbed reaction-diffusion equation}, Numer. Math. \textbf{78} (1998),
  479--493.

\bibitem{verfurth13}
R.~Verf{\"u}rth, \emph{A posteriori error estimation techniques for finite
  element methods}, Numerical Mathematics and Scientific Computation, Oxford
  University Press, Oxford, 2013. \MR{3059294}

\bibitem{Wi11_2}
T.~P. Wihler, \emph{An $hp$-adaptive {FEM} procedure based on continuous
  {S}obolev embeddings}, PAMM \textbf{11} (2011), no.~1, Proceedings in Applied
  Mathematics and Mechanics, 82nd Annual GAMM Scientific Conference, Graz,
  Austria.

\bibitem{Wi11}
\bysame, \emph{An {$hp$}-adaptive strategy based on continuous {S}obolev
  embeddings}, J. Comput. Appl. Math. \textbf{235} (2011), no.~8, 2731--2739.
  \MR{2763181 (2012b:65190)}

\end{thebibliography}
\end{document}